\def\Cale#1{\EuScript #1}
\def\PP{\Cale P}
\def\D{\mathrm{d}}
\def\E{\mathrm{e}}
\newcommand{\DS}{\displaystyle}
\newtheorem{theorem}{Theorem}[section]
\newtheorem{lemma}[theorem]{Lemma}
\theoremstyle{definition}
\newtheorem{remark}[theorem]{Remark}
\numberwithin{equation}{section}
\title[On the Markov extremal problem in the  $L^2$-norm]{On the Markov extremal problem in the  \boldmath$L^2$-norm with the classical weight functions}
\author[G. V. Milovanovi\'c]{Gradimir V. Milovanovi\'c}
\address[G. V. Milovanovi\'c]{Serbian Academy of Sciences and Arts, 11000 Beograd, Serbia \&
Faculty of Science and Mathematics, University of Ni\v s, 18000 Ni\v s, Serbia}
\email{{\tt gvm@mi.sanu.ac.rs}}
\keywords{Markov-Bernstein's extremal problems; Best constant; Orthogonal polynomials; Inner product; Eigenvalue problems.}
\subjclass[2020]{41A44, 33C45, 41A17, 65F15, 42C05}
\begin{document}

\begin{abstract}
This paper is devoted to Markov's extremal problems of the form $M_{n,k}=\sup_{p\in\PP_n\setminus\{0\}}{{\|p^{(k)}\|}_X}/{{\|p\|}_X}$ $(1\le k\le n)$, where $\PP_n$ is the set of all algebraic polynomials of degree at most $n$  and $X$ is a normed space, starting with original Markov's result in uniform norm on $X=C[-1,1]$  from the end of the 19th century. The central part is devoted to extremal problems on the space $X=L^2[(a,b);w]$ for the classical weights $w$ on $(-1,1)$, $(0,+\infty)$  and 
$(-\infty,+\infty)$. Beside a short  account on basic properties of the (classical) orthogonal polynomials on the real line, the explicit formulas for expressing $k$-th derivative of the classical orthonormal polynomials in terms of the same polynomials are presented, which are important in our study of this kind of extremal problems, using methods of linear algebra. Several results for all cases of the classical weights, including   algorithms for numerical computation of the best constants $M_{n,k}$, as well as their lower and upper bounds, asymptotic behaviour, etc.,
are also given. Finally, some  results on Markov's extremal problems on certain restricted classes of polynomials are also mentioned. 
\end{abstract}

\maketitle


\section{Introduction and Preliminaries}

Inequalities for polynomials and their derivatives, as well as the corresponding extremal problems, are very important in many areas in mathematics, but also in other computational and applied sciences. In  particular they play a fundamental rule in  {\it Approximation Theory}, e.g., inequalities of Markov and Bernstein-type  are fundamental for the proofs of many inverse theorems in the so-called
{\it Polynomial Approximation Theory}.  These inequalities and extremal problems can be considered in different, usually normed spaces. Several monographs  have been published in this area (cf.\ \cite{mil1994,bor1995Book,rah2002}), as well as many papers\footnote{This paper is dedicated to the mathematical contribution of  Professor Ravi Agarwal.}. 

 In this paper by $\PP_n$ we denote the set of all algebraic polynomials of degree at most $n$, and by 
 $\Pi_n$  the set of all  polynomials of exact degree $n$, so that
$\PP_n=\bigcup_{k=0}^n \Pi_k$, and 
 $\widehat\Pi_n$ will be the 
set of all monic polynomials of degree $n$,  i.e., 
\[\widehat\Pi_n=
\Bigl\{t^n+q(t)\,\mid\, q(t)\in \PP_{n-1}\Bigr\}\subset \Pi_n.\]

The first result in this area was 
connected with some investigations of the 
well-known Russian  chemist  Dmitri Mendeleev (1834--1907). 
In mathematical terms, Mendeleev's problem \cite{men1887} was as follows:
{\it If $t\mapsto P(t)$ is an arbitrary quadratic polynomial defined on an interval $[a,b]$, with 
$$\max\limits_{t\in [a,b]}P(t)-\min\limits_{t\in [a,b]}P(t)=L,$$ 
how large can $P'(t)$ be on $[a,b]$}? It can be reduced to a simpler problem by changing the horizontal scale and shifting the coordinate axis until we have $|P(t)|\le 1$, so that Mendeleev's problem becomes the following:
{\it If $t\mapsto P(t)$ is an arbitrary quadratic polynomial and
$|P(t)|\le 1$ on $[-1,1]$, how large can $|P'(t)|$ be on $[-1,1]$}?
 
Mendeleev found that $|P'(t)|\le 4$ on $[-1,1]$. This result is the best possible because for $P(t)=1-2t^2$ we have $P(t)\le 1$ and $P'(\pm 1)=4$. The corresponding problem for polynomials from $\PP_n$ was solved by a very famous Russian academician Andrei Andreyevich Markov (1856--1922) at the end of the 19th century. Markov's younger half-brother Vladimir Andreevich Markov  (1871--1897), although he died young, gained also an international reputation because he later solved the  problem for $k$-th derivative 
  $|P^{(k)}(t)|$, $k>1$. Both were students of the famous Pafnuty Lvovich Chebyshev  (1821--1894) at  St.\ Petersburg State University. Another member of  the Russian mathematical school, a student of the French Sorbonne  and of Jewish origin, is  Sergei Natanovich Bernstein  (1880--1968), whose results have left a deep mark on the development of this field. 
  
We mention here their basic results:  

\begin{theorem}[A.A. Markov \cite{mar1889} (1889)]\label{AAMarkov1889}
If $p\in\PP_n$ such that $|p(x)| \le 1$ on $[-1,1]$, then
\begin{equation}\label{markov-eqn}
|p'(x)| \le n^2 \ \ \mbox{ for }\ -1 \le x \le 1.
\end{equation}
This inequality is best possible and the equality is attained at only $x = \pm 1$, and only when $p(x) = \gamma T_n(x)$, where $\gamma$ is a complex number such that $|\gamma|=1$, and $T_n(x)$ is well-known Chebyshev polynomial of the first kind, defined by 
\[
T_n(x) = \cos(n \cos^{-1} x) = 2^{n-1} \prod\limits_{\nu = 1}^n \left\{x -\cos\frac{(2\nu-1)\pi}{2n}\right\}.	 
\] 
\end{theorem}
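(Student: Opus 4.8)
The plan is to prove the classical Markov inequality \eqref{markov-eqn} by first reducing the problem to a pointwise estimate at the endpoints and on the interior, and then treating the two regimes separately. First I would note that it suffices to prove $|p'(x)| \le T_n'(1) = n^2$, since $T_n'(\pm 1) = \pm n^2$ shows the constant cannot be improved. The standard route is via the Chebyshev polynomials and the associated equioscillation/extremal properties: for the interior of $[-1,1]$ one uses Bernstein's inequality, $|p'(x)| \le n/\sqrt{1-x^2}$ for $|x|<1$, which already gives $|p'(x)| \le n^2$ on the sub-interval $|x| \le \sqrt{1-1/n^2}$; the genuine difficulty is therefore confined to the two boundary zones near $x = \pm 1$, where $n/\sqrt{1-x^2}$ blows up and a sharper argument is needed.

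For the boundary zones I would use the classical interpolation argument of Markov (or the later streamlined version due to Bernstein and others). Fix $x_0$ in, say, $[\sqrt{1-1/n^2}, 1]$. Consider the nodes where $T_n$ equioscillates, i.e.\ the points $y_\nu = \cos(\nu\pi/n)$, $\nu = 0,1,\dots,n$, at which $T_n(y_\nu) = (-1)^\nu$. Write the Lagrange interpolation formula for $p'$, or better, expand $p$ itself in the Chebyshev nodes; the key identity is that for any $p \in \PP_n$ one has $p'(x_0)$ expressed as a linear combination $\sum_\nu c_\nu(x_0)\, p(y_\nu)$ of the values of $p$ at the extremal points of $T_n$, with coefficients $c_\nu(x_0)$ that depend only on $n$ and $x_0$. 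Applying this to $p = T_n$ and using $|T_n(y_\nu)| = 1$ shows $\sum_\nu |c_\nu(x_0)| = |T_n'(x_0)|$, \emph{provided} the signs of $c_\nu(x_0)$ alternate as $(-1)^\nu$ on the relevant range of $x_0$. Then for general $p$ with $\|p\|_{C[-1,1]} \le 1$ we get
\[
|p'(x_0)| \le \sum_{\nu=0}^n |c_\nu(x_0)|\,|p(y_\nu)| \le \sum_{\nu=0}^n |c_\nu(x_0)| = |T_n'(x_0)| \le T_n'(1) = n^2,
\]
where the last step uses monotonicity of $|T_n'|$ on $[\sqrt{1-1/n^2},1]$.

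The main obstacle, and the point requiring the most care, is verifying the sign pattern of the interpolation coefficients $c_\nu(x_0)$ on the boundary zone — i.e.\ showing that $\operatorname{sgn} c_\nu(x_0) = (-1)^\nu$ (up to a global sign) for all $x_0$ in $[\sqrt{1-1/n^2},1]$. This is exactly the step that makes the triangle-inequality bound tight and couples it to $T_n'$; it fails for $x_0$ deep in the interior, which is precisely why one needs Bernstein's inequality there instead. One convenient way to organize this is via the Duffin--Schaeffer refinement: if a polynomial $p$ of degree $n$ satisfies $|p(y_\nu)| \le 1$ merely at the $n+1$ Chebyshev extremal points, then already $|p'(x)| \le |T_n'(x)|$ wherever $|T_n(x)| \ge 1$, and $\le n^2$ throughout $[-1,1]$. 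Finally, to pin down the equality cases, I would trace back through the inequalities: equality at an interior point forces equality in Bernstein's inequality (impossible there for $|x|<1$ unless $x=\pm 1$ in the limiting sense), and equality at $x_0 = \pm 1$ forces $|p(y_\nu)| = 1$ with the correct alternating signs for every $\nu$, which by the uniqueness of the equioscillating polynomial of degree $n$ forces $p = \gamma T_n$ with $|\gamma| = 1$.
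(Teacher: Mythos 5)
The paper does not actually prove this theorem: it is quoted as A.~A.~Markov's classical result with a citation to \cite{mar1889} and serves only as historical background, so there is no in-paper argument to compare yours against. Judged on its own, your outline follows the standard modern proof and is correct in strategy: Bernstein's inequality disposes of $|x|\le\sqrt{1-1/n^2}$, and an interpolation identity expressing $p'(x_0)$ through values of $p$ at Chebyshev nodes, together with a sign-alternation property of the coefficients, handles the boundary zones. Two caveats. First, the step you yourself flag --- that $\operatorname{sgn} c_\nu(x_0)=(-1)^\nu$ throughout the boundary zone --- is the entire crux, and with your choice of nodes (the extremal points $y_\nu=\cos(\nu\pi/n)$) it really does require the Duffin--Schaeffer machinery; as written, your argument asserts rather than proves it. The verification is essentially immediate if you instead interpolate $p'\in\PP_{n-1}$ at the $n$ zeros $x_\nu=\cos\frac{(2\nu-1)\pi}{2n}$ of $T_n$ and bound $|p'(x_\nu)|\le n/\sqrt{1-x_\nu^2}$ by Bernstein: the identity $p'(x)=\sum_\nu p'(x_\nu)\,T_n(x)/\bigl(T_n'(x_\nu)(x-x_\nu)\bigr)$ and the fact that $|T_n'(x_\nu)|=n/\sqrt{1-x_\nu^2}$ give $|p'(x)|\le\sum_\nu T_n(x)/(x-x_\nu)=T_n'(x)\le n^2$ for $x\ge\cos\frac{\pi}{2n}$, the sign pattern being obvious there because $T_n(x)\ge0$ and every factor $x-x_\nu$ is nonnegative. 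Second, the equality discussion needs more care than ``tracing back'': you should first reduce complex-valued $p$ to real $p$ by considering $\re(\gamma p)$ for suitable unimodular $\gamma$, then note that $T_n'$ is strictly increasing on the boundary zone (so equality forces $x_0=\pm1$) and that strict inequality holds in the Bernstein zone, and finally invoke uniqueness of the equioscillating polynomial to conclude $p=\gamma T_n$. None of this is wrong, but these are the places where the outline would need to be filled in to become a proof.
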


Introducing the {\it  uniform norm} for polynomials on $[-1,1]$ as
\[
\|p\|_\infty=\|p\|_{[-1,1]}=\max_{x\in[-1,1]}|p(x)|,\]	
then  Markov's result  can be expressed in the form 
\begin{equation}\label{MarkovEP}
\sup_{p\in\PP_n\setminus\{0\}}\frac{\|p'\|_\infty}{\|p\|_\infty}=T_n(1)=n^2.
\end{equation}
It is known as {\it Markov's extremal problem} in the uniform norm.

\begin{theorem}[V.A. Markov \cite{Vmar1892,mar1916}\ (1892;\,1916)]\label{VAMarkov1892}
For $1\le k\le n$, we have 
\begin{equation}\label{MarkovEPk}
\sup_{p\in\PP_n\setminus\{0\}}\frac{\|p^{(k)}\|_\infty}{\|p\|_\infty}= T^{(k)}_n(1),
\end{equation}
where
\[T^{(k)}_n(1)=\frac{n^2(n^2-1^2)(n^2-2^2)\cdots
(n^2-(k-1)^2)}{(2k-1)!!}.\]
Extremal polynomial is  $p(x)=\gamma T_n(x)$, with $|\gamma |=1$.
\end{theorem}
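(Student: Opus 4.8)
The plan is to reduce Theorem~\ref{VAMarkov1892} to three ingredients: the value of $T_n^{(k)}(1)$, the identity $\|T_n^{(k)}\|_\infty=T_n^{(k)}(1)$, and the pointwise endpoint estimate $|p^{(k)}(1)|\le T_n^{(k)}(1)\|p\|_\infty$ for all $p\in\PP_n$ (with equality only for unimodular multiples of $T_n$); the last and hardest point is then to promote this estimate to the whole interval. The first two ingredients I would read off from Chebyshev's differential equation. Differentiating $(1-x^2)T_n''-xT_n'+n^2T_n=0$ exactly $k$ times by Leibniz' rule gives $(1-x^2)u''-(2k+1)xu'+(n^2-k^2)u=0$ for $u=T_n^{(k)}$; evaluating at $x=1$ yields $T_n^{(k+1)}(1)=\frac{n^2-k^2}{2k+1}T_n^{(k)}(1)$, and telescoping from $T_n(1)=1$ gives exactly the stated product. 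For the norm identity I would use the energy $F(x)=u(x)^2+\frac{1-x^2}{n^2-k^2}u'(x)^2$; substituting $u''$ from the equation one finds $F'(x)=\frac{4kx}{n^2-k^2}u'(x)^2$, so $F$ decreases on $[-1,0]$ and increases on $[0,1]$, hence $F(x)\le\max\{F(-1),F(1)\}=T_n^{(k)}(1)^2$ on $[-1,1]$; since $F\ge u^2$ there, $|T_n^{(k)}(x)|\le T_n^{(k)}(1)$, strictly in the interior when $k<n$.

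For the endpoint estimate I would use exact Lagrange interpolation at the $n+1$ extrema $x_j=\cos(j\pi/n)$ of $T_n$, where $T_n(x_j)=(-1)^j$. Then $p^{(k)}(1)=\sum_{j=0}^n p(x_j)\,\ell_j^{(k)}(1)$, so $|p^{(k)}(1)|\le\|p\|_\infty\sum_j|\ell_j^{(k)}(1)|$. The key lemma is the sign relation $(-1)^j\ell_j^{(k)}(1)>0$: writing $\ell_j=c_j(x-1)^{\varepsilon_j}h_j$ with $\varepsilon_0=0$, $\varepsilon_j=1$ for $j\ge1$, and all zeros of $h_j$ strictly to the left of $1$, Leibniz' rule reduces $\ell_j^{(k)}(1)$ to a positive multiple of $c_j\,h_j^{(k-\varepsilon_j)}(1)$, where $h_j^{(m)}(1)>0$ (a polynomial whose zeros all lie left of $1$ has positive value and derivatives at $1$) while $\operatorname{sign}c_j=\operatorname{sign}\prod_{i\ne j}(x_j-x_i)=(-1)^j$ since the nodes decrease with $j$. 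Consequently $\sum_j|\ell_j^{(k)}(1)|=\sum_j(-1)^j\ell_j^{(k)}(1)=\sum_j T_n(x_j)\ell_j^{(k)}(1)=T_n^{(k)}(1)$, the last equality being the interpolation identity applied to $T_n$. Equality in the triangle inequality, since every $\ell_j^{(k)}(1)\ne0$, forces $(-1)^jp(x_j)$ to be a constant of modulus $\|p\|_\infty$, so $p$ is a unimodular multiple of $T_n$; this also settles the equality case at $x=1$, and at $x=-1$ by parity.

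Finally, to obtain $\|p^{(k)}\|_\infty\le T_n^{(k)}(1)\|p\|_\infty$ for every $p$ I would argue variationally. One first reduces to real polynomials (replace $p$ by $\re(\alpha p)$ with $|\alpha|=1$ chosen so that $\re(\alpha p^{(k)})$ equals $|p^{(k)}|$ at its maximum point), and then, by compactness of $\{p\in\PP_n:\|p\|_\infty=1\}$, fixes an extremal $q$ with $\|q^{(k)}\|_\infty=M_{n,k}=|q^{(k)}(x_0)|$. The classical Chebyshev-type argument then shows $q$ has enough contact with $\pm1$ to force $q=\pm T_n$: otherwise a suitable small perturbation $q+\delta h$, with $h$ of low degree, would both push $\|q+\delta h\|_\infty$ below $1$ and keep the ratio $\|(q+\delta h)^{(k)}\|_\infty/\|q+\delta h\|_\infty$ above $M_{n,k}$, contradicting maximality; then $q=\pm T_n$ together with the first paragraph gives $M_{n,k}=\|T_n^{(k)}\|_\infty=T_n^{(k)}(1)$. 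I expect this last step to be the main obstacle: engineering $h$ so that it simultaneously relaxes all the active constraints $|q|=1$ and controls $q^{(k)}(x_0)$ requires a careful weighted count of the contacts of $q$ with $\pm1$ (interior contacts, where $q\mp1$ has a double zero, must be weighted more heavily than boundary ones), and this bookkeeping is the technical heart of V.~A.\ Markov's original proof. An alternative route for this step combines the endpoint estimate with a Bernstein-type bound away from $\pm1$, but the general-$k$ version of that argument needs comparable care near the endpoints.
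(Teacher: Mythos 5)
The paper states this theorem without proof (it is quoted as a classical result with references to the original sources), so there is no internal argument to compare against; your proposal has to be judged on its own terms. Your first three ingredients are correct and cleanly executed: the recursion $T_n^{(k+1)}(1)=\frac{n^2-k^2}{2k+1}\,T_n^{(k)}(1)$ obtained by differentiating the Chebyshev equation $k$ times, the monotone energy $F=u^2+\frac{1-x^2}{n^2-k^2}(u')^2$ giving $\|T_n^{(k)}\|_\infty=T_n^{(k)}(1)$, and the endpoint bound $|p^{(k)}(1)|\le T_n^{(k)}(1)\|p\|_\infty$ via Lagrange interpolation at the Chebyshev extrema together with the sign lemma $(-1)^j\ell_j^{(k)}(1)>0$ are all sound, as is the equality analysis at $x=\pm1$.

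The genuine gap is your fourth step, which is in fact the entire content of V.~A.~Markov's theorem: passing from the endpoint estimate to $\|p^{(k)}\|_\infty\le T_n^{(k)}(1)\|p\|_\infty$ on all of $[-1,1]$. You describe the variational strategy (extract an extremal $q$ by compactness and show by perturbation that $q=\pm T_n$), but you do not construct the perturbation $h$, and you yourself flag that the weighted count of the contact points of $q$ with $\pm1$ --- the step that actually forces $q=\pm T_n$ --- is not supplied. As written, nothing in the proposal excludes the possibility that the supremum is attained at an interior point by a polynomial other than $\gamma T_n$ with a value exceeding $T_n^{(k)}(1)$; the obvious fallbacks (iterating Bernstein's inequality $k$ times, or splicing your endpoint bound with an interior Bernstein-type bound) yield constants strictly worse than $T_n^{(k)}(1)$ for $k\ge2$, which is precisely why this theorem is so much harder than the $k=1$ case. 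To close the argument you would need either to carry out the contact-counting perturbation in full (the bookkeeping you defer), or to replace the fourth step by a pointwise interior estimate of Schur/Duffin--Schaeffer type, proving $|p^{(k)}(x)|\le T_n^{(k)}(1)\|p\|_\infty$ for each fixed $x$ with a case distinction according to the position of $x$ relative to the extreme zeros of the relevant derivative of $T_n$; neither is present, so the proof is incomplete at its essential step.
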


\begin{theorem}[S.N. Bernstein \cite{ber1912} (1912)]\label{SNBernstein1912} 
If $p\in\PP_n$ and $|p(x)| \le 1$ for $-1 \le x \le 1$, then
\begin{equation}\label{bern-eqn5bern}
|p'(x)| \le \frac{n}{\sqrt{1 - x^2}}, \quad -1 < x < 1.
\end{equation}
The equality is attained at the Chebyshev points 
$x = x_\nu = \cos \frac{(2\nu - 1)\pi}{2n}$,  $1 \le \nu \le n$, 
if and only if $p(x) = \gamma T_n(x)$, where $|\gamma| = 1$,  
and $(\ref{bern-eqn5bern})$ is best possible.
\end{theorem}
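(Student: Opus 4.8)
The plan is to move to trigonometric form and prove a sharper Szeg\H{o}-type inequality, from which both the bound (\ref{bern-eqn5bern}) and the description of its extremal polynomials will follow. With $x=\cos\theta$ and $g(\theta):=p(\cos\theta)$, the function $g$ is a cosine trigonometric polynomial of degree $\le n$ with $\|g\|_\infty=\|p\|_{[-1,1]}\le 1$ (since $\cos$ carries $\mathbb{R}$ onto $[-1,1]$), and $g'(\theta)=-\sin\theta\,p'(\cos\theta)$ gives $|g'(\theta)|^2=(1-\cos^2\theta)|p'(\cos\theta)|^2$; hence (\ref{bern-eqn5bern}) is equivalent to $|g'(\theta)|\le n$ for all real $\theta$. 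I would obtain this from the assertion that
\begin{equation*}
T'(\theta_0)^2+n^2T(\theta_0)^2\le n^2\|T\|_\infty^2
\end{equation*}
for every real trigonometric polynomial $T$ of degree $\le n$ and every real $\theta_0$. The passage from real to complex $p$ is free: since $|w|=\max_{\omega}\re(\E^{-\I\omega}w)$ and $(\re(\E^{-\I\omega}p))'=\re(\E^{-\I\omega}p')$, the inequality for complex $p$ follows by applying the real case to each $\re(\E^{-\I\omega}p)$.

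For the displayed inequality I would compare $T$ with a single sinusoid. Normalize $\|T\|_\infty\le 1$, and (replacing $T(\theta)$ by $T(2\theta_0-\theta)$ if necessary) assume $T'(\theta_0)\ge 0$; if $|T(\theta_0)|=1$ then $T'(\theta_0)=0$ and there is nothing to prove, and if $T'(\theta_0)<n\sqrt{1-T(\theta_0)^2}$ the inequality is immediate. So assume $|T(\theta_0)|<1$ and $T'(\theta_0)\ge n\sqrt{1-T(\theta_0)^2}$; choose $\gamma\in(-\pi/2,\pi/2)$ with $\sin\gamma=T(\theta_0)$, set $S(\theta)=\sin(n(\theta-\theta_0)+\gamma)$ and $\Delta=S-T$, so that $\Delta(\theta_0)=0$, $\Delta'(\theta_0)=n\cos\gamma-T'(\theta_0)\le 0$ and $\deg\Delta\le n$. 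Within one period $S$ equals $+1$ at $n$ points and $-1$ at $n$ points interlacing them, so (using $|T|\le 1$) $\Delta\ge 0$ at the first group and $\Delta\le 0$ at the second; since $\Delta'(\theta_0)\le 0$, the arc of this cyclic chain containing $\theta_0$ carries an extra zero of $\Delta$ besides $\theta_0$, and a count of sign changes around the circle then gives at least $2n+2$ zeros of $\Delta$ counted with multiplicity. But a nonzero real trigonometric polynomial of degree $\le n$ has at most $2n$ zeros per period (set $z=\E^{\I\theta}$ and use the fundamental theorem of algebra), so $\Delta\equiv 0$; hence $T=S$ and $T'(\theta_0)=n\cos\gamma=n\sqrt{1-T(\theta_0)^2}$, i.e.\ equality holds in the displayed relation. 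In particular, equality there forces $T(\theta)=\sin(n\theta+c)$ for a constant $c$ with $\|T\|_\infty=1$, and equality in the Bernstein bound $|T'(\theta_0)|=n$ forces in addition $\sin(n\theta_0+c)=0$, i.e.\ $T(\theta)=\pm\sin(n(\theta-\theta_0))$.

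To identify the extremal $p$ in (\ref{bern-eqn5bern}), suppose first $p$ is real and $|p'(x_0)|\sqrt{1-x_0^2}=n$ for some $x_0\in(-1,1)$; put $\theta_0=\arccos x_0$. If $\|p\|_{[-1,1]}<1$ the displayed inequality would give $|g'(\theta_0)|<n$, a contradiction, so $\|p\|_{[-1,1]}=\|g\|_\infty=1$, and then the equality analysis above applied to $T=g$ gives $g(\theta)=\pm\sin(n(\theta-\theta_0))$. Evenness of $g$ forces $\cos(n\theta_0)=0$, so $\theta_0=\tfrac{(2\nu-1)\pi}{2n}$ (hence $x_0=x_\nu$) and $g(\theta)=\pm\cos n\theta=\pm T_n(\cos\theta)$, i.e.\ $p=\pm T_n$. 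For complex $p$, choose $\omega$ with $\re(\E^{-\I\omega}p'(x_0))=|p'(x_0)|$; then $q:=\re(\E^{-\I\omega}p)$ is real with $\|q\|_{[-1,1]}\le 1$ and $q'(x_0)\sqrt{1-x_0^2}=n$, so $q=\pm T_n$ by the real case, and writing $\E^{-\I\omega}p=q+\I r$ with $r:=\im(\E^{-\I\omega}p)$ one gets $T_n(x)^2+r(x)^2=|\E^{-\I\omega}p(x)|^2\le 1$; as $T_n(x)^2=1$ at the $n+1$ points $\cos\tfrac{j\pi}{n}$ $(0\le j\le n)$, $r$ has $n+1$ zeros, so $r\equiv 0$ and $p=\gamma T_n$ with $|\gamma|=1$. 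Conversely, for $p=\gamma T_n$ one has $p'(\cos\theta)=\gamma n\sin n\theta/\sin\theta$, so $|p'(\cos\theta)|\sqrt{1-\cos^2\theta}=n|\sin n\theta|$, which equals $n$ exactly at the Chebyshev points $x=x_\nu$; in particular the constant $n$ cannot be improved.

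The delicate point I anticipate is the sign-change bookkeeping of the second step: one has to go around a full period, controlling the wrap-around between the extreme points of $S$ flanking $\theta_0$, the degenerate situations where $|T|=1$ occurs at an extremum of $S$, and (for the equality statement) the transition of $\theta_0$ from a simple to a double zero of $\Delta$, all organized so that the true zero count, with multiplicities, exceeds $2n$. With the Szeg\H{o}-type inequality and its equality case secured, the remaining ingredients — the substitution $x=\cos\theta$, the complex-to-real reduction, and the final matching with $\gamma T_n$ — are routine.
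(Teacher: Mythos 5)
The paper does not prove Theorem \ref{SNBernstein1912} at all: it is quoted in the introduction as classical background, with only the citation to Bernstein's 1912 memoir, so there is no in-paper argument to compare against. Judged on its own, your proof is correct and follows the classical Szeg\H{o}/van der Corput--Schaake route: pass to $g(\theta)=p(\cos\theta)$, establish the stronger inequality $T'(\theta_0)^2+n^2T(\theta_0)^2\le n^2\|T\|_\infty^2$ for trigonometric polynomials by comparison with the sinusoid $S(\theta)=\sin(n(\theta-\theta_0)+\gamma)$ and a zero count on $\Delta=S-T$, and then descend to the algebraic statement together with its equality case. All the individual reductions check out: the reflection to arrange $T'(\theta_0)\ge0$, the observation that $|T(\theta_0)|=\|T\|_\infty$ forces $T'(\theta_0)=0$, the complex-to-real passage via $|w|=\max_\omega\re(\E^{-\I\omega}w)$, the use of evenness of $g$ to force $\cos n\theta_0=0$ and hence $x_0=x_\nu$, and the elimination of $\im(\E^{-\I\omega}p)$ by its $n+1$ zeros at the extrema of $T_n$. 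The one place a referee would ask for full detail is exactly the one you flag: the cyclic sign-change count must handle zeros of $\Delta$ at extreme points of $S$ (where $T=\pm1$ forces $T'=S'=0$, so such a zero is automatically double and its multiplicity can be split between the two adjacent arcs) and the dichotomy $\Delta'(\theta_0)<0$ versus $\Delta'(\theta_0)=0$; in either case one gets strictly more than $2n$ zeros with multiplicity, so $\Delta\equiv0$ and the argument closes. This route buys more than the statement asks for --- the Szeg\H{o} refinement interpolating between Bernstein's and Markov-type behaviour --- at the cost of the combinatorial bookkeeping; a proof of the inequality alone (without the equality characterization) could be had more cheaply, e.g.\ by interpolation at the zeros of $\cos n\theta$, but would not identify the extremal polynomials as $\gamma T_n$.
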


Note that this version of Bernstein's inequality is a pointwise inequality, while Markov's inequality  
$|p'(x)| \le n^2$, $-1 < x < 1$, 
 is global. Combining these inequalities, we get
\[|p'(x)|\le \min\left\{n^2,\,\frac{n}{\sqrt{1-x^2}}\right\},
\qquad -1\le x\le 1.\]
The natural question is how large can $|p'(x)|$  be for a given $x\in[-1,1]$, when $p\in\PP_n$ and 
$|p(x)|\le 1$ on $[-1,1]$?  Such a function  $x\mapsto M_n(x)$  is evidently an even function on $[-1,1]$. Explicit expressions for $n=2$ and $n=3$ can be found in \cite[pp.~539]{mil1994}. The determination of $M_n(x)$ for $n\ge 4$  is very complicated and it can be given by a technique of Voronovskaja \cite{vor1970}.

Taking norms different from the {\it  uniform norm} we can consider 
Markov's extremal problem in other spaces, e.g.\ in $L^r$ $(r\ge 1)$, or even in spaces with {\it quasi-norms} $L^r$ $(0\le r< 1)$, etc. 
One can also consider the so-called {\it  mixed Markov type inequalities} on $[-1,1]$,
\[
{\|P^{(k)}\|}_r\le M_{r,q}(n,k) {\|P\|}_q,\quad 0\le k\le n;\ 0\le r,q\le+\infty,	
\]
when $P\in{\PP}_n$, where 
\begin{align*}
&{\|P\|}_r=\left(\frac{1}{2}\int_{-1}^1 |P(t)|^r{\D}t\right)^{1/r}\quad (0<r<+\infty),\\
&{\|P\|}_0 =\lim_{r\to0+}{\|P\|}_r
=\exp\left(\frac{1}{2}\int_{-1}^1\log|P(t)|\,{\D}t\right),\\
&{\|P\|}_\infty=\max_{-1\le t\le 1}|P(t)|.	
\end{align*}
Evidently, ${\|P\|}_r$ for $0<r<1$ is a {\it quasi-norm} of $P$. For some extremal problems of this type see \cite{Kon,boj1982-3,Glazyrina2005,Glazyrina2005Zam,Glazyrina2008Zam,Simonov2012}. 

For $r\ne q$ and $k=0$ the inequality  is know as the  {\it  Nikol'ski\u\i\ inequality} (see \cite[pp.~495--507]{mil1994}).
For $k = n$, the previous problem  
reduces to finding a polynomial that deviates least from zero in the $L^q$-metric 
with a fixed leading coefficient. For example, when $q=+\infty$, $q=2$, and $q=1$, the solutions (extremal polynomials) are known: the Chebyshev polynomial of the first kind $T_n(t)$, the Legendre polynomial $P_n(t)$, and the Chebyshev polynomial of the second kind $U_n(t)$, respectively.  

However, the set $\PP_n$ can be restricted to some of subsets $W_n\subset \PP_n$, and then we can consider the corresponding Markov extremal problem on such restricted sets again in different norms (cf.~\cite[Chapters 5 \& 6]{mil1994}).

In this paper we consider Markov's extremal problems of the form
\begin{equation}\label{MEProb}
M_{n,k}=\sup_{p\in\PP_n\setminus\{0\}}\frac{{\|p^{(k)}\|}_X}{{\|p\|}_X}
\quad(1\le k\le n),	
\end{equation}
on the inner product functional space $X=L^2[(a,b);w]$, with the inner product defined by 
\begin{equation}\label{innPrCOP}
(p,q)_w=\int_a^b p(t)\,\overline{q(t)}\,w(t)\,{\D}t,	
\end{equation}
where $t\mapsto w(t)$ is a non-negative  function  on  $(a,b)$, $-\infty\le a<b\le +\infty$,
for which all moments $\mu_k=\int_a^b t^k w(t)\,{\D}t$, $k=0,1,\ldots$, exist and $\mu_0>0$. Such a function is known as the {\it weight function} on  $(a,b)$.
The norm of an element $p\in X$ is given by
\begin{equation}\label{normWX}
\|p\|_X=\sqrt{(p,p)_w}=\left(\int_a^b|p(t)|^2w(t)\,{\D}t\right)^{1/2}.
\end{equation}
For $M_{n,k}$ in \eqref{MEProb} we use  terms the {\it best}, {\it exact} or {\it sharp constant}.
In particular, we treat the basic extremal problem for the first derivative, i.e., the determination of the  best constant $M_{n,1}\equiv M_n$.

The paper is organized as follows.  A short  account on 
basic properties of the orthogonal polynomials on the real line, and in particular for ones known as the ``classical orthogonal polynomials'', is given in Section \ref{SEC2}. Explicit formulas for expressing $k$-th derivative of the classical orthonormal polynomials in terms of the same polynomials are presented in Section \ref{SEC3}. Such formulas are important in our study of extremal problems \eqref{MEProb} on $X=L^2[(a,b);w]$ for the classical weight functions $w$ on $(-1,1)$, $(0,+\infty)$, and $(-\infty,+\infty)$ in Section \ref{SEC4}. 
Special  cases of  $L^2$ Markov's extremal problems for all classical weight functions are given in Sections \ref{SEC5}--\ref{SEC7}. Finally, in Section  \ref{SEC8} some  results on Markov's extremal problems on certain restricted classes of polynomials are mentioned.  

\section{Basic Properties of the Orthogonal and the Classical Orthogonal Polynomials}\label{SEC2}

The  orthogonal polynomials are basic tools in the investigation of the extremal problems \eqref{MEProb} on the space $X=L^2[(a,b);w]$, and therefore in the this section we give some basic properties of the orthogonal polynomials on the real line, including, in particular, an important class of the so-called {\it very classical orthogonal polynomials} (cf.\ \cite{Chi1978,Gau2004,gmgvm2008}).

The inner product (\ref{innPrCOP}) gives rise to a unique system of {\it orthonormal polynomials} $p_n(\,\cdot\,)=p_n(\,\cdot\,;w)$,  such that
$p_n(t)=\gamma_n t^n+ \mbox{terms of lower degree}$, with 
$\gamma_n>0$ for each $n\in\mathbb{N}$, 
and
\begin{equation}\label{orthnp}
(p_k,p_n)_w=\int_a^b p_k(t)\,\overline{p_n(t)}\,w(t)\,{\D}t=\delta_{kn}, \quad k,n\ge0.
\end{equation}
Also, we need here the {\it monic orthogonal polynomials},
in notation, 
\[\pi_n(t)=\pi_n(t;w)=\frac{p_n(t)}{\gamma_n}= t^n + \mbox{terms of lower degree}.\] 

Because of the property of the inner product $(tp,q)_w=(p,tq)_w$, orthogonal polynomials on the real line satisfy a {\it three-term recurrence relation} (cf. \cite[p.~99]{gmgvm2008}).

{\rm(a)} For orthonormal polynomials $p_n(t)$ we have
\begin{equation}\label{eq:ttrrORTNOR}
 tp_n(t)=b_{n+1}p_{n+1}(t)+a_np_n(t)+b_np_{n-1}(t),\quad n=0,1,\ldots, 
  \end{equation}
with $p_0(t)=\gamma_0=1/\sqrt{\mu_0}$ and $p_{-1}(t)=0$,  where the coefficients $a_n=a_n(w)$ and $b_n=b_n(w)$ are given by
\[a_n=(tp_n,p_n)_w\quad\mbox{and}\quad 
b_n=(p_n,tp_{n-1})_w=\frac{\gamma_{n-1}}{\gamma_n}>0;\]

{\rm(b)} For monic orthogonal polynomials $\pi_n(t)$ we have
\begin{equation}\label{eq:ttrrORTMon}
\pi_{n+1}(t)=(t-\alpha_n)\pi_n(t)-\beta_n \pi_{n-1}(t),\quad n=0,1,\ldots, 
  \end{equation}
with $\pi_0(t)=1$ and $\pi_{-1}(t)=0$, where the coefficients $\alpha_n=\alpha_n(w)$ and $\beta_n=\beta_n(w)$ are given by
\[\alpha_n=a_n=\frac{(t\pi_n,\pi_n)_w}{(\pi_n,\pi_n)_w}\quad(n\ge0),\quad  
\beta_n=b_n^2=\frac{(\pi_n,\pi_n)_w}{(\pi_{n-1},\pi_{n-1})_w}>0\quad(n\ge1).\]

These coefficients in the three-term recurrence relations (\ref{eq:ttrrORTNOR}) and (\ref{eq:ttrrORTMon})
depend only on the weight function $w$. The coefficients $\beta_k$, $k\ge 1$, in (\ref{eq:ttrrORTMon}) are positive, and  $\beta_0$ may be arbitrary, but  sometimes it is convenient to define it by $\beta_0=\mu_0=\int_a^b w(t)\,{\D}t$. Then, it is easy to see that 
\[\|\pi_n\|_X=\sqrt{(\pi_n,\pi_n)_w}=\sqrt{\beta_0\beta_1\cdots\beta_n},\]
where the norm is given by (\ref{normWX}).

An important result on zero distribution of orthogonal polynomials on the real line  is the following  (cf. \cite[p.~99]{gmgvm2008}):

All zeros of $\pi_n(t)$, $n\in\mathbb{N}$, are real and distinct and are located in the interior of $(a,b)$. Furthermore, the zeros of  $\pi_n(t)$ and $\pi_{n+1}(t)$ interlace, i.e.,
\[\tau_{n+1,\nu}<\tau_{n,\nu}<\tau_{n+1,\nu+1},\quad \nu=1,\ldots,n.	\]
Here,  $a<\tau_{n,1} < \tau_{n,2} < \cdots <\tau_{n,n}<b$ denote the zeros of $\pi_n(t)$ in increasing order.

Using procedures of numerical linear algebra, notably the QR or QL algorithm, it is easy to compute the zeros of the orthogonal polynomials $\pi_n(t)$ rapidly and efficiently as eigenvalues of the {\it Jacobi matrix} of order $n$ associated with the weight function $w$,
\begin{equation}\label{JacobiMAT}
 J_n(w)=\left[\begin{array}{ccccc}
 \alpha_0&\sqrt{\beta_1}&&&{\mathbf{O}}\\[1mm]
  \sqrt{\beta_1}&\alpha_1&\sqrt{\beta_2}\\
 &\sqrt{\beta_2}&\alpha_2&\ddots  \\
  &&\ddots&\ddots&\sqrt{\beta_{n-1}}\\[1mm]
{\mathbf{O}}&&&\sqrt{\beta_{n-1}}&\alpha_{n-1}
\end{array}\right].
 \end{equation}
Unfortunately, the recursion coefficients $\alpha_n$ and $\beta_n$ in (\ref{eq:ttrrORTMon})  are known explicitly only for some narrow classes of orthogonal polynomials. One of the most important 
classes for which these coefficients are known explicitly are surely the 
so--called {\it very classical} orthogonal polynomials, which appear frequently in  applied analysis and  computational sciences. Orthogonal polynomials for which the recursion coefficients are not known we call {\it strongly non--classical polynomials}. 

\subsection{Classical weight functions and the corresponding orthogonal polynomials}\label{orthCW}

In the sequel we consider only very classical orthogonal polynomials, omitting   the term ``very'' and call them simply the {\it classical orthogonal polynomials}. They are distinguished by several particular properties
(cf. \cite[pp.~121--146]{gmgvm2008}).
 Their weight function, the so-called  {\it classical weight function} on $(a,b)$, satisfies a first order differential equation of the form  
 \[\frac{\D}{{\D}t}(A(t)w(t))=B(t)w(t),\] 
where $B(t)$ is a first degree polynomial, and $A(t)$ is one of degrees not greater than two. For such classical weights we will write $w\in CW$, and for the classical orthogonal polynomials use a general notation $Q_n(t)$. We note
that for such weights $w\in CW$, we have $w\in C^1(a,b)$, as well as
\[\lim_{t\to a+}t^m A(t)w(t)=0\quad\mbox{and}\quad \lim_{t\to b-}t^m A(t)w(t)=0\qquad (m=0,1,\ldots).\]

Without loss of generality, the classical  polynomials  orthogonal with respect to the inner product (\ref{innPrCOP}), can be considered only on 
three different intervals: $(-1,1)$, $(0,+\infty)$, and $(-\infty,+\infty)$, because every interval $(a,b)$ can be transformed by a linear transformation to one of the previous intervals. These three cases are presented in Table~\ref{ClassOP}.
\begin{table}[h]
\caption{Classification of the classical orthogonal polynomials.}
\label{ClassOP} 
\begin{center}
{\setlength{\tabcolsep}{6pt}
\renewcommand{\arraystretch}{1.2} 
\begin{tabular}{|c|c|c|c|c|}\hline 
$(a,b)$ &  $w(t)$  &  $A(t)$  & $B(t)$  & $Q_n(t)$ \\ \hline  
$(-1,1)$  &  $(1-t)^\alpha(1+t)^\beta$  &  $1-t^2$ & 
$\beta-\alpha-(\alpha+\beta+2)t$   
& $P_n^{(\alpha,\beta)}(t)$ \\ 
$(0,+\infty)$ & $t^s{\E}^{-t}$ & $t$ & $s+1-t$  & $L_n^s(t)$ \\
   $(-\infty,+\infty)$ & ${\E}^{-t^2}$ & $1$ & $-2t$  & $H_n(t)$\\ \hline 
\end{tabular} }
\end{center} 
\end{table} 

The corresponding orthogonal polynomials are known as  the {\it Jacobi polynomials} $P_n^{(\alpha,\beta)}(t)$ \  $(\alpha,
\beta>-1)$, the {\it generalized Laguerre polynomials} $L_n^s(t)$ \ $(s>-1)$,
and finally as the {\it Hermite polynomials} $H_n(t)$. Because of the existence of the moments, the parameters $\alpha,\beta$, and $s$ should be greater than $-1$.  The classical orthogonal polynomial $t\mapsto Q_n(t)$ is a particular
solution of the following differential equation 
$L[y]\equiv A(t)y''+B(t)y' +\lambda_ny=0$,	
where 
\begin{equation}\label{eq:lamn}
\lambda_n=-n\Bigl(\frac12(n-1)A''(0)+B'(0)\Bigr). 	
\end{equation}
The corresponding values are $\lambda_n=n(n+\alpha+\beta+1)$ for the Jacobi polynomials, $\lambda_n=n$  for the generalized Laguerre polynomials, and $\lambda_n=2n$ for the Hermite polynomials.

The corresponding orthonormal classical polynomials will be denoted by small letters $q_n(t)$; in particular, by  $p_n^{(\alpha,\beta)}(t)$, $l_n^s(t)$, and $h_n(t)$, and the monic polynomials as $\widehat Q_n(t)$, i.e., by $\widehat P_n^{(\alpha,\beta)}(t)$, $\widehat L_n^s(t)$, and $\widehat H_n(t)$.

There are several characterizations of the classical orthogonal polynomials (cf.\ \cite{AlSChi1972}). One of them was given by Agarwal and Milovanovi\'c \cite{BC4,RJ110}:

\begin{theorem}\label{thm:AM} Let $w\in CW$ and  $X=L^2[(a,b);w]$.  Then for all $P(t)\in \Cale P_n$ the inequality
\begin{equation}\label{eq:AM}
(2\lambda_n+B'(0))\,\bigl\|\sqrt{A}P'\bigr\|_X^2 \le \|A P''\|_X^2 +
                   \lambda_n^2\,\| P\|_X^2
\end{equation}
holds, with equality if only if $P(t)=cQ_n(t)$, where $Q_n(t)$ is the classical orthogonal polynomial and $c$ is an arbitrary constant.
\end{theorem}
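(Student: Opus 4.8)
The plan is to observe that inequality \eqref{eq:AM} is nothing but the nonnegativity of $\bigl\|L_n[P]\bigr\|_X^2$, where $L_n[P]:=A(t)P''+B(t)P'+\lambda_nP$ is the operator appearing on the left of the classical differential equation, rewritten by means of two integrations by parts. Since $\deg A\le 2$ and $\deg B\le 1$, the operator $L_n$ maps $\PP_n$ into $\PP_n$, and for the orthonormal classical polynomials $q_j$ the equation $Aq_j''+Bq_j'+\lambda_jq_j=0$ gives $Aq_j''+Bq_j'=-\lambda_jq_j$. Hence, expanding $P=\sum_{j=0}^n c_jq_j$ and using linearity and orthonormality,
\[
L_n[P]=\sum_{j=0}^n c_j(\lambda_n-\lambda_j)q_j,\qquad\text{so}\qquad \bigl\|L_n[P]\bigr\|_X^2=\sum_{j=0}^n|c_j|^2(\lambda_n-\lambda_j)^2\ge 0,
\]
with equality exactly when $c_j=0$ for every $j$ with $\lambda_j\ne\lambda_n$. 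Since $j\mapsto\lambda_j$ is strictly increasing in each of the three classical cases (immediate from the values of $\lambda_n$ recorded after \eqref{eq:lamn}, using $\alpha+\beta>-2$), this forces $c_j=0$ for $j\ne n$, i.e.\ $P=cQ_n$.

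Next I would convert $\bigl\|L_n[P]\bigr\|_X^2\ge0$ into \eqref{eq:AM}. Using that $w\in CW$ means $(A(t)w(t))'=B(t)w(t)$, together with the boundary behaviour $\lim_{t\to a+}t^mA(t)w(t)=\lim_{t\to b-}t^mA(t)w(t)=0$ (which annihilates every boundary term below, since $A,B,P,P'$ are polynomials), two integrations by parts give
\[
(AP''+BP',P)_w=\int_a^b(AwP')'\,\overline{P}\,\D t=-\int_a^b Aw\,|P'|^2\,\D t=-\bigl\|\sqrt A\,P'\bigr\|_X^2,
\]
and, since $B'\equiv B'(0)$ is a constant,
\[
2\re(AP'',BP')_w+\|BP'\|_X^2=\int_a^b\bigl[wAB(|P'|^2)'+wB^2|P'|^2\bigr]\D t=-\int_a^b wAB'\,|P'|^2\,\D t=-B'(0)\bigl\|\sqrt A\,P'\bigr\|_X^2,
\]
the last step integrating the first term by parts and using $(wA)'=wB$ once more. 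Expanding the square,
\[
\bigl\|L_n[P]\bigr\|_X^2=\|AP''\|_X^2+\bigl(2\re(AP'',BP')_w+\|BP'\|_X^2\bigr)+2\lambda_n\re(AP''+BP',P)_w+\lambda_n^2\|P\|_X^2,
\]
and substituting the two identities turns $\bigl\|L_n[P]\bigr\|_X^2\ge0$ into
\[
0\le\|AP''\|_X^2+\lambda_n^2\|P\|_X^2-\bigl(2\lambda_n+B'(0)\bigr)\bigl\|\sqrt A\,P'\bigr\|_X^2,
\]
which is exactly \eqref{eq:AM}. Equality holds iff $\bigl\|L_n[P]\bigr\|_X=0$, i.e.\ iff $P=cQ_n$ by the previous paragraph.

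The calculations are routine once the correct object, $\bigl\|L_n[P]\bigr\|_X^2$, has been identified, and that identification is the one genuinely non-obvious step; everything else is just the defining relation $(Aw)'=Bw$, the differential equation for $q_j$, and bookkeeping. The points that require care are the vanishing of the boundary terms in the two integrations by parts — precisely what the decay hypothesis on $A(t)w(t)$ is for — and keeping track of the complex conjugates, so that $(AP''+BP',P)_w$ (a value of the self-adjoint, negative semidefinite operator $P\mapsto AP''+BP'$ tested against $P$) and the grouped term $2\re(AP'',BP')_w+\|BP'\|_X^2$ come out real.
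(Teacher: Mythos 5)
The paper states this theorem without proof, citing Agarwal--Milovanovi\'c \cite{BC4,RJ110}, and your argument is precisely the one from that source: the inequality is the expansion of $\|AP''+BP'+\lambda_nP\|_X^2\ge 0$ via the Pearson relation $(Aw)'=Bw$ and two integrations by parts whose boundary terms vanish by the stated decay of $t^mA(t)w(t)$. All steps check out, including the equality analysis through the strict monotonicity of $j\mapsto\lambda_j$ in each classical case.
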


An important property of the classical orthogonal polynomials is the following 
result (cf. \cite[pp.~124--126]{gmgvm2008}):

\begin{theorem}\label{izvVR}
 The derivatives of the classical orthogonal polynomials $Q_n(t)$, $n\in\mathbb{N}$, with respect to the weight function $t\mapsto w(t)$  $(w\in CW)$,  also form a sequence of the classical orthogonal polynomials $Q'_n(t)$
 with respect to the weight function $t\mapsto w_1(t)=A(t)w(t)$ $(w_1\in CW)$.	
 \end{theorem}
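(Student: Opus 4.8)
The plan is to verify the three defining structural properties of classical orthogonal polynomials for the sequence $\{Q_n'(t)\}_{n\ge1}$ with respect to the weight $w_1(t)=A(t)w(t)$. First I would check that $w_1\in CW$, i.e.\ that $w_1$ satisfies a Pearson-type equation $\frac{\D}{\D t}(A_1(t)w_1(t))=B_1(t)w_1(t)$ with $\deg B_1=1$ and $\deg A_1\le 2$. Taking $A_1(t)=A(t)$ is the natural choice; then $(A_1 w_1)'=(A^2 w)'=2AA'w+A^2w'$, and using $(Aw)'=Bw$ one rewrites $A^2w'=A(Bw-A'w)=(AB-AA')w$, so $(A_1w_1)'=(AB+AA')w=(B+A')w_1$. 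Hence $B_1(t)=B(t)+A'(t)$, which is indeed of degree one (since $\deg A'\le 1$ and $\deg B=1$, and the leading coefficients do not cancel because $B'(0)$ contributes to $\lambda_n\ne0$); and the boundary decay conditions $\lim t^mA_1w_1=\lim t^mA^2w=0$ follow from the corresponding conditions for $w$. I would also note $w_1\in C^1(a,b)$.

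Next I would establish orthogonality of the derivatives. Let $m\ne n$; integrating by parts,
\[
(Q_n',Q_m')_{w_1}=\int_a^b Q_n'(t)\,\overline{Q_m'(t)}\,A(t)w(t)\,\D t
=\Bigl[Q_n'\overline{Q_m}\,Aw\Bigr]_a^b-\int_a^b \bigl(A(t)w(t)Q_n'(t)\bigr)'\,\overline{Q_m(t)}\,\D t.
\]
The boundary term vanishes by the decay property of $Aw$. Using the differential equation $L[Q_n]=A Q_n''+BQ_n'+\lambda_nQ_n=0$ together with $(Aw)'=Bw$, one checks that $(AwQ_n')'=(AQ_n''+BQ_n')w=-\lambda_n Q_n w$, so the remaining integral equals $\lambda_n(Q_n,Q_m)_w=0$. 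Thus $(Q_n',Q_m')_{w_1}=0$ for $m\ne n$, and for $m=n$ the same computation gives $\|Q_n'\|_{w_1}^2=\lambda_n\|Q_n\|_w^2>0$ (note $\lambda_n>0$ in all three classical cases), confirming the $Q_n'$ are a genuine orthogonal system. Since $\deg Q_n'=n-1$ and the $Q_n'$ are pairwise orthogonal, they span $\PP_{n-1}$ correctly and form the orthogonal polynomial sequence for $w_1$ up to normalization.

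Finally I would confirm that $\{Q_n'\}$ is itself \emph{classical} by exhibiting its second-order differential equation: differentiating $AQ_n''+BQ_n'+\lambda_nQ_n=0$ gives $A(Q_n')''+(A'+B)(Q_n')'+(B'+\lambda_n)Q_n'=0$, i.e.\ $A_1 y''+B_1 y'+\mu_{n-1}y=0$ with $A_1=A$, $B_1=A'+B$ as above and eigenvalue $\mu_{n-1}=\lambda_n+B'(0)$; one checks from \eqref{eq:lamn} that this is exactly the classical eigenvalue $-(n-1)\bigl(\tfrac12(n-2)A_1''(0)+B_1'(0)\bigr)$ attached to $w_1$, since $A_1''=A''$ and $B_1'(0)=A''(0)+B'(0)$. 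The main obstacle, and the only point requiring real care, is the bookkeeping in the last two steps: verifying that the identity $(AwQ_n')'=-\lambda_nQ_nw$ holds (which is where the differential equation and the Pearson equation must be combined correctly), and confirming the eigenvalue $\lambda_n+B'(0)$ matches the classical formula for $w_1$ — everything else is a routine integration by parts using the already-stated decay properties of $A(t)w(t)$ at the endpoints.
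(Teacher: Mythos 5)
Your proof is correct. The paper itself gives no proof of this theorem (it is stated with a citation to Mastroianni--Milovanovi\'c), and your argument is precisely the standard one from that source: the Pearson equation $\bigl(A\cdot Aw\bigr)'=(A'+B)\,Aw$ identifies $w_1=Aw$ as a classical weight, integration by parts together with the identity $(AwQ_n')'=-\lambda_nQ_nw$ gives orthogonality of the derivatives and $\|Q_n'\|_{w_1}^2=\lambda_n\|Q_n\|_w^2>0$, and the degree count plus uniqueness of the orthogonal system finishes the job; your eigenvalue check $\lambda_n+B'(0)=-(n-1)\bigl(\tfrac{n}{2}A''(0)+B'(0)\bigr)$ is also correct. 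The only point left implicit is that $A(t)\ge 0$ on $(a,b)$ in all three cases, which is what makes $w_1$ a genuine (nonnegative) weight rather than merely a function with the right Pearson equation.
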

 
 According to Theorem \ref{izvVR} and the uniqueness of orthogonal polynomials, the following formulas
\begin{align}
\frac{\D}{{\D}t}P_n^{(\alpha,\beta)}(t)&=\frac{1}2(n+\alpha+\beta+1)P_{n-1}^{(\alpha+1,\beta+1)}(t)	,\label{izvPJ}\\[1mm]
\frac{\D}{{\D}t}L_n^{s}(t)&=-L_{n-1}^{s+1}(t),\label{izvPL}\\[1mm]
\frac{\D}{{\D}t}H_n(t)&=2nH_{n-1}(t)\label{izvPH}
\end{align}
hold.

\section{Differentiation Formulas for the Classical Orthogonal Polynomials}\label{SEC3} 

In this section we give formulas for expressing $k$-th derivative of the orthonormal classical polynomials $q_n$
in terms of the same polynomials, i.e.,
\[\frac{\D^k}{{\D}t^k}q_n(t)=\sum_{\nu=0}^{n-k}c_{\nu,n}^{(k)}\,q_\nu(t),\quad k\le n.\]
For all classical polynomials we can get explicit formulas for the coefficients $c_{\nu,n}^{(k)}$. Such expressions we use in our study of extremal problems \eqref{MEProb}, when $w\in CW$. Namely, in our consideration of the $L^2$ Markov extremal problems for the classical weights  we usually reduce them to  eigenvalue problems on the finite-dimensional spaces generated by orthonormal polynomials. Therefore, we are interested in expressing the  derivatives of the basis polynomials as a linear combination of exactly the same polynomials.

Now, we separately consider three classical cases (see  
Table~\ref{ClassOP}).
\smallskip

{\bf Hermite polynomials.} In this case, using \eqref{izvPH} we get 
\[h'_n(t) =\sqrt{2n}\,h_{n-1}(t),\quad   h''_n(t)=2\sqrt{n(n-1)}\,h_{n-2}(t),\quad\mbox{etc.}\] 

Thus,
\begin{equation}\label{HermDkn}
h^{(k)}_n(t)=2^{k/2}\sqrt{n(n-1)\cdots(n-k+1)}\,h_{n-k}(t),\quad k\le n.
\end{equation}

{\bf Generalized Laguerre polynomials.}  In this case, we first easily can obtain that
\begin{equation}\label{GLONorP0}
L_n^s(t)=(-1)^n\sqrt{\frac{\Gamma(n+s+1)}{n!}}\,\ell_n^s(t),
\end{equation}
Now we start with the
known expansion \cite[p.~356]{AAR1999}
\[L_n^\beta(t)=\sum_{\nu=0}^n\frac{(\beta-\alpha)_{n-\nu}}{(n-\nu)!}L_\nu^\alpha(t),\]
where $(\alpha)_k$ denotes the Pochhammer symbol (or the shifted factorial, since $(1)_k=k!$) defined for any complex number $\alpha$ by
\[(\alpha)_k=\frac{\Gamma(\alpha+k)}{\Gamma(\alpha)}=\left\{\begin{array}{ll}
1,&k=0,\\[1mm]
\alpha(\alpha+1)\cdots(\alpha+k-1),&k\in\mathbb{N}=\{1,2,\ldots\}.	
\end{array}\right.\]
Iterating the formula  (\ref{izvPL}) gives the corresponding formula for the $k$-th derivative
\[\frac{\D^k}{{\D}t^k}L_n^{s}(t)=(-1)^k L_{n-k}^{s+k}(t),\quad k\le n.\]
Using the last two formulas, with $\alpha=s$ and $\beta=s+k$, we  get
\[\frac{\D^k}{{\D}t^k}L_n^{s}(t)=(-1)^k \sum_{\nu=0}^{n-k}\frac{(k)_{n-k-\nu}}{(n-k-\nu)!}L_\nu^s(t),\quad k\le n.\]
Finally, by (\ref{GLONorP0}), we obtain the corresponding expansion in orthonormal generalized Laguerre polynomials,
\begin{equation}\label{expGLPon}
\frac{\D^k}{{\D}t^k}\ell_n^{s}(t)=\sum_{\nu=0}^{n-k} c_{\nu,n}^{(k)}(s)\ell_\nu^s(t),\quad k\le n,	
\end{equation}
where
\[c_{\nu,n}^{(k)}(s)=(-1)^{n-k-\nu}\sqrt{\frac{(\nu+1)_{n-\nu}}{(\nu+s+1)_{n-\nu}}}\ \binom{n-\nu-1}{k-1},	
\]
In the standard Laguerre  case $(s=0)$ we have
\[c_{\nu,n}^{(k)}=c_{\nu,n}^{(k)}(0)=(-1)^{n-k-\nu}\binom{n-\nu-1}{k-1}\]
and (\ref{expGLPon}) becomes
\begin{equation}\label{expLPons0}
\frac{\D^k}{{\D}t^k}\ell_n(t)=(-1)^{n-k}\sum_{\nu=0}^{n-k}(-1)^{\nu} \binom{n-\nu-1}{k-1}\,\ell_\nu(t),\quad k\le n.	
\end{equation}

{\bf Jacobi polynomials.} In order to get an analogous formula of (\ref{expGLPon}) for the Jacobi polynomials, we use the following expansion (cf. \cite[Lemma 7.1.1, p.~357]{AAR1999}), but written for the monic Jacobi polynomials,
\begin{equation}\label{forPngd}
\widehat P_n^{(\gamma,\delta)}(t)=\sum_{\nu=0}^n c_{\nu,n}(\alpha,\beta;\gamma,\delta)\widehat P_\nu^{(\alpha,\beta)}(t),	
\end{equation} 
where
\begin{align}\label{cvnabgd} 
c_{\nu,n}(\alpha,\beta;\gamma,\delta)&= 
 \frac{2^{n-\nu } \binom{n}{\nu } (\gamma +\nu +1)_{n-\nu }}{(n+\gamma +\delta +\nu +1)_{n-\nu }}\\ 
&\times{}_3F_2\left[\begin{array}{c}
-n+\nu,\ n+\nu+\gamma+\delta+1,\ \nu+\alpha+1\\[1mm]
\nu+\gamma+1,\ 2\nu+\alpha+\beta+2	
\end{array}\biggm|1\right].	\nonumber
\end{align}
Here  ${}_3F_2$ is the generalized hypergeometric function, which is, in general,  defined by (cf. \cite[Chap.~2]{AAR1999})
\[
 _pF_q \left[\begin{array}{c}
a_1,\,\ldots,\,a_p \\[2mm]
 b_1,\,\ldots,\,b_q 	
\end{array}\!\!\Biggm| z \right] = \sum_{n=0}^{+\infty}\frac{(a_1)_n \cdots (a_p)_n}{(b_1)_n \cdots (b_q)_n}\cdot \frac {z^n}{n!}.\]             
The function ${}_pF_q$ is implemented as {\tt HypergeometricPFQ} in  Wolfram's {\sc Mathematica} and suitable for both symbolic and numerical calculation.

Iterating (\ref{izvPJ}), again for the monic Jacobi polynomials, we obtain
\[\frac{{\D}^k}{{\D}t^k}\widehat P_n^{(\alpha,\beta)}(t)=n(n-1)\cdots(n-k+1)\widehat P_{n-k}^{(\alpha+k,\beta+k)}(t),\quad k\le n.\]
Now, taking $\gamma=\alpha+k$ and $\delta=\beta+k$ in (\ref{forPngd}), with $n:=n-k$,  we get
\[\frac{{\D}^k}{{\D}t^k}\widehat P_n^{(\alpha,\beta)}(t)=k!\binom{n}{k}\sum_{\nu=0}^{n-k} c_{\nu,n-k}(\alpha,\beta;\alpha+k,\beta+k)\widehat P_\nu^{(\alpha,\beta)}(t),\quad k\le n.\]
Finally, for the orthonormal Jacobi polynomials $p_\nu^{(\alpha,\beta)}(t)=\gamma_\nu \widehat P_n^{(\alpha,\beta)}(t)$, we obtain
\begin{equation}\label{expanDkJ}
\frac{{\D}^k}{{\D}t^k}p_n^{(\alpha,\beta)}(t)=\sum_{\nu=0}^{n-k} d_{\nu,n}^{(k)}(\alpha,\beta)p_\nu^{(\alpha,\beta)}(t),\quad k\le n,	
\end{equation}
where
\begin{equation}\label{deoverce}
d_{\nu,n}^{(k)}(\alpha,\beta)=k!\binom{n}{k}\frac{\gamma_n}{\gamma_\nu} c_{\nu,n-k}(\alpha,\beta;\alpha+k,\beta+k)	
\end{equation}
and $\gamma_n=\gamma_n(\alpha,\beta)$ is defined by 
\begin{equation}\label{lcJpOrtnor0}
p^{(\alpha,\beta)}_0 (t)=\gamma_0=\frac{1}{\sqrt{\mu_0}}=\sqrt{\frac{\Gamma(\alpha+\beta+2)}{2^{\alpha+\beta+1}\Gamma(\alpha+1)\Gamma(\beta+1)}}\,.
\end{equation}
The leading coefficients $\gamma_n$ of the orthonormal Jacobi polynomial $p^{(\alpha,\beta)}_n (t)=\gamma_n t^n+\mbox{terms of lower degree}$,\  $n\ge1$, are given by (cf. \cite[p.~133]{gmgvm2008})
\begin{equation}\label{lcJpOrtnor}
\gamma_n=
   \frac{\sqrt{2n+\alpha+\beta+1}\ \Gamma(2n+\alpha+\beta+1)}{\sqrt{2^{2n+\alpha+\beta+1}n!\,\Gamma(n+\alpha+1)\Gamma(n+\beta+1)\Gamma(n+\alpha+\beta+1)}}. 	
\end{equation}

In the simplest (Legendre) case, when $w(t)=1$ ($\alpha=\beta=0$), the  leading coefficient (\ref{lcJpOrtnor}) reduces to
\[\gamma_n=\frac{1}{2^n}\sqrt{\frac{2n+1}{2}}\,\binom{2n}{n}.\]

In four Chebyshev cases when $\alpha,\beta\in\{-1/2,1/2\}$, we have the following coefficients for the monic polynomials:
\smallskip

For the Chebyshev weight of the first kind $w(t)=1/\sqrt{1-t^2}$ \ $(\alpha=\beta=-1/2)$:
$\alpha_n=0\ (n\ge0);\  \beta_0=\pi,\ \beta_1={1}/{2}, \ \beta_n={1}/{4}\  (n\ge2)$;
\smallskip

For the Chebyshev weight of the second kind $w(t)=\sqrt{1-t^2}$ \ $(\alpha=\beta=1/2)$:  
$\alpha_n=0\ (n\ge0);\  \beta_0= {\pi}/{2},\  \beta_n= {1}/{4}\ (n\ge1)$;
\smallskip 

For the Chebyshev weight of the third kind $w(t)=\sqrt{(1+t)/(1-t)}$ 
\ $(\alpha=-\beta=-1/2)$: 
$\alpha_0= {1}/{2},\  \alpha_n=0\  (n\ge1);\ \beta_0=\pi,\  \beta_n= {1}/{4}\ (n\ge1)$;
\smallskip

For the Chebyshev weight of the fourth kind  $w(t)=\sqrt{(1-t)/(1+t)}$  \ $(\alpha=-\beta=1/2)$: 
$\alpha_0=-{1}/{2},\  \alpha_n=0\ (n\ge1);\  \beta_0=\pi,\ \beta_n= {1}/{4}\  (n\ge1)$.
\smallskip

The  leading coefficients in the corresponding orthonormal polynomials for four Chebyshev cases are
\begin{align*}
(\alpha=\beta=-1/2):\qquad &\gamma_0=\frac{1}{\sqrt{\pi}},\quad  \gamma_n=2^{n-1}\sqrt{\frac{2}{\pi}}\quad (n\ge 1);\\[2mm] 
(\alpha=\beta=1/2):\qquad &\gamma_n=2^n\sqrt{\frac{2}{\pi}}\quad (n\ge 0);\\[2mm]
(\alpha=-\beta=\pm1/2):\qquad &\gamma_n=\frac{2^n}{\sqrt{\pi}}\quad (n\ge 0).
\end{align*}

\section{Extremal Problems of Markov's Type for Polynomials in  $L^2$--Norms for the Classical Weight Functions}\label{SEC4}

In this section we consider Markov's $L^2$-extremal problem \eqref{MEProb} on the space $X=L^2[(a,b);w]$, i.e.,
\begin{equation}\label{MEProb1}
M_{n,k}=\sup_{p\in\PP_n\setminus\{0\}}\frac{{\|p^{(k)}\|}_X}{{\|p\|}_X}
\quad(1\le k\le n),	
\end{equation}
 with the inner product defined by (\ref{innPrCOP}), where $w$ is a classical weight function ($w\in CW$). 

In 1987  Milovanovi\'c \cite{RJ50}  showed that the exact constant in (\ref{MEProb1}) can be found as the maximal eigenvalue of a square matrix of Gram's type or as the spectral norm of one triangular matrix.

Here we consider this extremal problem on  $X=L^2[(a,b);w]$, with the inner product defined by (\ref{innPrCOP}), where  $w\in CW$. As before,  the corresponding classical orthonormal polynomials will be denoted by $q_n(t)$, $n=0,1,\ldots\,$ .  The main approach will be based on analysis of the differentiation operator $D^k\equiv{\D}^k/{{\D}t^k}$, as a linear map $D^k:\PP_n\to\PP_{n-k}$ and its matrix of type
$(n-k+1)\times n$, using the orthonormal basis ${\Cale B}_m=
\{q_0,q_1,\ldots,q_m\}$ in the finite-dimensional spaces $\PP_m$, with $m=n$ and $m=n-k$.

Let $p(t)$ be an arbitrary real polynomial  in $\PP_n$, which can be uniquely represented in the orthonormal basis ${\Cale B}_n$ as
\begin{equation}\label{pinBn}
p(t)=\sum_{j=0}^nc_jq_j(t)\quad (c_j\in\mathbb{R},\ j=0,1,\ldots,n).	
\end{equation} 
Then the differentiation operator $D^k$, which maps elements (polynomials) from the $(n+1)$-dimensional space $\PP_n$ to another $(n-k+1)$-dimensional space $\PP_{n-k}$, can be uniquely described by the images of all basis polynomials $q_\nu(t)$,  $\nu=0,1,\ldots,n$, in the space $\PP_{n-k}$, represented in the corresponding basis of orthonormal polynomials ${\Cale B}_{n-k}=\{q_0,q_1,\ldots,q_{n-k}\}$. 

Since $\deg q_j(t)=j$, we have 
\begin{equation}\label{razv}
\left\{\begin{array}{rcl}	
D^kq_j(t)&=&0,\qquad 0\le j\le k-1,\\[1mm]
D^kq_k(t)&=&a_{1,k}q_0(t),\\[1mm]
D^kq_{k+1}(t)&=&a_{1,k+1}q_0(t)+a_{2,k+1}q_1(t),\\
&\vdots&\\
D^kq_{j}(t)&=&a_{1,j}q_0(t)+a_{2,j}q_1(t)+\cdots+a_{j-k+1,j}q_{j-k}(t),\\
&\vdots&\\
D^kq_{n}(t)&=&a_{1,n}q_0(t)+a_{2,n}q_1(t)+\cdots+a_{n-k+1,n}q_{n-k}(t),	
\end{array}\right.
\end{equation}
where
\begin{equation}\label{elemAnk}
a_{i,j}= (D^kq_j,q_{i-1})_w=\int_a^b q_j^{(k)}(t)\,{q_{i-1}(t)}\,w(t)\,{\D}t,
\end{equation}
so that the  matrix of the operator $D^k:\PP_n\to\PP_{n-k}$
 is given by
\begin{equation}\label{matrOAnk}
\left[\begin{array}{ccccccccc}
0&0&\cdots&0&|&a_{1,k}&a_{1,k+1}&\cdots &a_{1,n}\\[1mm]
0&0&\cdots&0&|&0&a_{2,k+1}&\cdots &a_{2,n}\\	
\vdots&\vdots&&\vdots&|&\vdots&\vdots&\ddots &\vdots\\[1mm]
0&0&\cdots&0&|&0&0&\cdots &a_{n-k+1,n}		
\end{array}\right]=\bigl[\,\mathbf{O}\,\,|\,\,A_{n,k}\bigr],	
\end{equation}
where $\mathbf{O}$ is a zero matrix of the type $(n-k+1)\times n$, and $A_{n,k}$
is a real quadratic matrix of order $n-k+1$. The columns in the matrix (\ref{matrOAnk}) are vectors whose coordinates are coefficients in the expansions of the images of $D^kq_j(t)$, $j=0,1,\ldots,n$, in the basis ${\Cale B}_{n-k}$.   
The quadratic matrix $A_{n,k}$ in (\ref{matrOAnk}) can be interpreted as a matrix of the differentiation operator $D^k$, denoted now as ${\Cale D}^k$, which maps the space $\PP_n\setminus\PP_{k-1}$ to $\PP_{n-k}$. Both of these spaces are of the same dimensions, i.e.,  $\dim\,(\PP_n\setminus\PP_{k-1})=\dim \PP_{n-k}=n-k+1$. Here, ${\Cale D}^k$ is a restriction of the operator $D^k$ from $\PP_n$ to $\PP_n\setminus\PP_{k-1}$. Thus, ${\Cale D}^k$ belongs
to the set of all linear operators from $\PP_n\setminus\PP_{k-1}$ to $\PP_{n-k}$, denoted by $L(\PP_n\setminus\PP_{k-1},\PP_{n-k})$. If we denote by ${\Cale M}_m$
the set of all quadratic matrices of order $m$, then the mapping 
${\Cale D}^k \mapsto A_{n,k}$ is a bijection from $L(\PP_n\setminus\PP_{k-1},\PP_{n-k})$ onto ${\Cale M}_{n-k+1}$ (cf.\ \cite[Chap.~II]{gvmrzdjLA}), and the extremal problem (\ref{MEProb1}) can be considered by methods of linear algebra. 

Using (\ref{pinBn}), for the norm of  each $p(t)\in\PP_n$ we have
\[ {\|p\|}_X^2=\sum_{j=0}^n c_j^2\ge \sum_{j=k}^n c_j^2, \]
with equality if and only if $c_0=c_1=\cdots=c_{k-1}$, 
as well as
\[
{\Cale D}^kp(t)=\sum_{j=k}^n c_j{\Cale D}^k q_j(t)=\sum_{j=k}^n c_j\left(\sum_{i=1}^{j-k+1} a_{i,j}q_{i-1}(t)\right),\]
i.e.,
\[
{\Cale D}^kp(t)=\sum_{i=1}^{n-k+1}\left(\sum_{j=i+k-1}^n a_{i,j}c_j\right)q_{i-1}(t).
\]
Since
\[{\|{\Cale D}^kp\|}_X^2=\sum_{i=1}^{n-k+1}\Biggl(\sum_{j=i+k-1}^n a_{i,j}c_j\Biggr)^2,\]
Markov's $L^2$-extremal problem (\ref{MEProb1}) can be considered in the form
\[M_{n,k}^2=\sup_{p\in\PP_n\setminus\{0\}}\frac{\|D^kp\|_X^2}{\|p\|_X^2}=
\sup_{p\in\PP_n\setminus\PP_{k-1}}\frac{\|{\Cale D}^kp\|_X^2}{\|p\|_X^2},
\]
i.e.,
\begin{equation}\label{MEProbMOD}
M_{n,k}^2= 
\max_{c_j\in\mathbb{R}\,(k\le j\le n)}\frac{\DS\sum_{i=1}^{n-k+1}\Biggl(\sum_{j=i+k-1}^n a_{i,j}c_j\Biggr)^2}{\DS \sum_{j=k}^n c_j^2},	
\end{equation}
where $c_0=c_1=\cdots=c_{k-1}$. 
 
Introducing two real $(n-k+1)$-dimensional vectors $\mathbf{c}=[c_k\ c_{k+1}\ \cdots\ c_n]^T$ and $\mathbf{y}=[y_1\ y_{2}\ \cdots\ y_{n-k+1}]^T$, defined by $\mathbf{y}=A_{n,k}\mathbf{c}$, where $A_{n,k}$ is a block matrix in 
(\ref{matrOAnk}), i.e., 
\begin{equation}\label{Ank-mat}
A_{n,k}=\left[\begin{array}{cccc}
a_{1,k}&a_{1,k+1}&\cdots&a_{1,n}\\[2mm]
0      &a_{2,k+1}&\cdots&a_{2,n}\\[2mm]	
\vdots&\vdots&\ddots&\vdots\\[2mm]
0&0&\cdots&a_{n-k+1,n}
\end{array}\right],	
\end{equation}
we can express the coordinates of the vector $\mathbf{y}$ by the following system of linear equations
\begin{equation}\label{sysEqY-C}
\left\{\begin{array}{rl}
y_1&=a_{1,k}c_k+a_{1,k+1}c_{k+1}+\cdots+a_{1,n}c_n,\\[1mm]
y_2&=\phantom{a_{1,k}c_k+\ \ }a_{2,k+1}c_{k+1}+\cdots+a_{2,n}c_n,\\[1mm]
	&\ \,\vdots\\[1mm]
y_{n-k+1}&=\phantom{a_{1,k}c_k+a_{2,k+1}c_{k+1}+\quad\,}a_{n-k+1,n}c_n.
\end{array}\ \right.	
\end{equation}
In this way, (\ref{MEProbMOD}) becomes
\begin{equation}\label{MEProbFIN}
M_{n,k}^2=  
\max_{c_j\in\mathbb{R}\,(k\le j\le n)}\frac{\DS\sum_{i=1}^{n-k+1} y_i^2}{\DS \sum_{j=k}^n c_j^2}=\max_{\mathbf{c}}\frac{\langle \mathbf{y},\mathbf{y} \rangle}{\langle\mathbf{c},\mathbf{c} \rangle}=\max_{\mathbf{c}}\frac{\left\langle A_{n,k}\mathbf{c},A_{n,k}\mathbf{c} \right\rangle}{\langle\mathbf{c},\mathbf{c} \rangle},	
\end{equation}
where $\langle\,\cdot\,\,\cdot\,\rangle$ is the standard inner product in an $(n-k+1)$-dimensional space. 

Introducing the matrices $B_{n,k}$ and $C_{n,k}$ by
\[B_{n,k}=A_{n,k}^TA_{n,k}\quad\mbox{and}\quad C_{n,k}=\bigl(A_{n,k}A_{n,k}^T\bigr)^{-1}=\bigl(A_{n,k}^T\bigr)^{-1} A_{n,k}^{-1},\]
(\ref{MEProbFIN}) can be written as
\begin{equation}\label{MEProbMODfs}
M_{n,k}^2= \max_{\mathbf{c}}\frac{\left\langle B_{n,k}\mathbf{c},\mathbf{c} \right\rangle}{\langle\mathbf{c},\mathbf{c} \rangle}=\left(\min_{\mathbf{y}}\frac{\left\langle C_{n,k}\mathbf{y},\mathbf{y} \right\rangle}{\langle\mathbf{y},\mathbf{y} \rangle}\right)^{-1}.	
\end{equation}
These symmetric matrices $B_{n,k}$ and $C_{n,k}$ are  positive definite, so that the corresponding quadratic forms $\left\langle B_{n,k}\mathbf{c},\mathbf{c} \right\rangle=\mathbf{c}^T B_{n,k}\mathbf{c}$ and $\left\langle C_{n,k}\mathbf{y},\mathbf{y} \right\rangle=\mathbf{y}^T C_{n,k}\mathbf{y}$ are positive for all non-zero vectors $\mathbf{c}$ and $\mathbf{y}$.
All eigenvalues of such matrices are positive numbers.

Now, for determining the best constant $M_{n,k}$ in (\ref{MEProbMODfs}) we use the well know result on the bounds for the quadratic form of a positive definite matrix $G$,
\begin{equation}\label{pdmQF}
\lambda_{\min}(G){\|\mathbf{x}\|}^2\le \mathbf{x}^T G\mathbf{x}
\le \lambda_{\max}(G){\|\mathbf{x}\|}^2,
\end{equation}  
where $\lambda_{\min}(G)$ and $\lambda_{\max}(G)$ are the minimal and maximal eigenvalues of the matrix $G$ and $\|\mathbf{x}\|=\sqrt{\langle\mathbf{x},\mathbf{x}\rangle}$ is the standard Euklidean norm of the vector~$\mathbf{x}$. 

This is how we get to the main result.

\begin{theorem}\label{thm323}
The best constant $M_{n,k}$ for the Markov $L^2$-extremal problem $(\ref{MEProb1})$ is given by
\begin{equation}\label{MnkBPa}
M_{n,k}=\sqrt{\lambda_{\max}(B_{n,k})},	
\end{equation}
where $\lambda_{\max}(B_{n,k})$ is the maximal eigenvalue of the matrix  $B_{n,k}=A_{n,k}^TA_{n,k}$, and $A_{n,k}$ is the upper triangular matrix $(\ref{Ank-mat})$, with elements given by
$(\ref{elemAnk})$. An extremal polynomial is 
\begin{equation}\label{Expol}
p^*(t)=\sum_{\nu=k}^n c_k^*q_\nu(t),
\end{equation}
 where $\mathbf{c^*}=[c_k^*\ \, c_{k+1}^*\ \cdots\ \,c_n^*]^T$ is the eigenvector of the matrix $B_{n,k}$ corresponding to the maximal eigenvalue 
$\lambda_{\max}(B_{n,k})$.	

Alternatively, 
\begin{equation}\label{MnkBPb}
M_{n,k}=\frac{1}{\sqrt{\lambda_{\min}(C_{n,k})}},	
\end{equation}
where $\lambda_{\min}(C_{n,k})$ is the minimal eigenvalue of the matrix $C_{n,k}$ defined by  $C_{n,k}=\bigl(A_{n,k}^T\bigr)^{-1} A_{n,k}^{-1}$. 
\end{theorem}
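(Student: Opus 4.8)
The plan is to invoke the Rayleigh--Ritz variational characterization of the extreme eigenvalues of a symmetric positive definite matrix; almost all of the groundwork has already been laid in the reduction above. First I would verify that $A_{n,k}$ is genuinely invertible, which is exactly what makes $C_{n,k}$ well defined and both $B_{n,k}$ and $A_{n,k}A_{n,k}^T$ symmetric positive definite. The matrix $A_{n,k}$ in $(\ref{Ank-mat})$ is upper triangular, and its $i$-th diagonal entry is $a_{i,i+k-1}=(D^kq_{i+k-1},q_{i-1})_w$, i.e.\ the coefficient of the top-degree term $q_{i-1}$ in the expansion of $q_{i+k-1}^{(k)}$ in the basis ${\Cale B}_{n-k}$; since $\deg q_{i+k-1}=i+k-1$ gives $\deg q_{i+k-1}^{(k)}=i-1$, this coefficient is nonzero, whence $\det A_{n,k}=\prod_{i=1}^{n-k+1}a_{i,i+k-1}\ne 0$. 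Consequently $B_{n,k}=A_{n,k}^TA_{n,k}$ and $A_{n,k}A_{n,k}^T$ are positive definite, $C_{n,k}=(A_{n,k}A_{n,k}^T)^{-1}$ exists and is positive definite, and the eigenvalues occurring in the statement are all positive.

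Next, since $\PP_n$ is finite-dimensional, the supremum in $(\ref{MEProb1})$ is the maximum of a continuous Rayleigh quotient over the (compact) unit sphere of coefficient vectors, hence attained. Applying the upper bound in $(\ref{pdmQF})$ with $G=B_{n,k}$ to the first expression in $(\ref{MEProbMODfs})$ gives $\langle B_{n,k}\mathbf{c},\mathbf{c}\rangle/\langle\mathbf{c},\mathbf{c}\rangle\le\lambda_{\max}(B_{n,k})$ for all nonzero $\mathbf{c}$, with equality precisely when $\mathbf{c}$ is an eigenvector of $B_{n,k}$ belonging to $\lambda_{\max}(B_{n,k})$. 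Hence $M_{n,k}^2=\lambda_{\max}(B_{n,k})$, which is $(\ref{MnkBPa})$. To pin down the extremal polynomial I would follow the equality cases back through the reduction: in ${\|p\|}_X^2=\sum_{j=0}^n c_j^2\ge\sum_{j=k}^n c_j^2$ equality forces $c_0=\cdots=c_{k-1}=0$, so an extremal $p^*$ lies in $\PP_n\setminus\PP_{k-1}$, and its coefficient vector $\mathbf{c}^*=[c_k^*\ \cdots\ c_n^*]^T$ must maximize the Rayleigh quotient of $B_{n,k}$, i.e.\ be an eigenvector for $\lambda_{\max}(B_{n,k})$; this is exactly $(\ref{Expol})$.

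For the alternative formula I would start from the second expression in $(\ref{MEProbMODfs})$ and apply the lower bound in $(\ref{pdmQF})$ with $G=C_{n,k}$, obtaining $\langle C_{n,k}\mathbf{y},\mathbf{y}\rangle/\langle\mathbf{y},\mathbf{y}\rangle\ge\lambda_{\min}(C_{n,k})$ with equality at the corresponding eigenvector, so $\min_{\mathbf{y}}\langle C_{n,k}\mathbf{y},\mathbf{y}\rangle/\langle\mathbf{y},\mathbf{y}\rangle=\lambda_{\min}(C_{n,k})$ and therefore $M_{n,k}=1/\sqrt{\lambda_{\min}(C_{n,k})}$, which is $(\ref{MnkBPb})$. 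As a consistency check one notes that $A_{n,k}^TA_{n,k}$ and $A_{n,k}A_{n,k}^T$ have the same (positive) spectrum, so $\lambda_{\min}(C_{n,k})=1/\lambda_{\max}(A_{n,k}A_{n,k}^T)=1/\lambda_{\max}(B_{n,k})$, in agreement with $(\ref{MnkBPa})$.

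There is no deep difficulty here; the delicate points are purely bookkeeping, namely the invertibility of $A_{n,k}$ (without which $C_{n,k}$ has no meaning) and the careful tracking of the equality conditions, which is what lets one assert that the supremum is attained and identify the extremal polynomial.
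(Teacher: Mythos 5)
Your proposal is correct and follows essentially the same route as the paper: the paper's proof is exactly the application of the Rayleigh-quotient bounds \eqref{pdmQF} with $G=B_{n,k}$ and $G=C_{n,k}$ to the two expressions in \eqref{MEProbMODfs}, together with the equality cases identifying the extremal polynomial. Your additional verifications (invertibility of $A_{n,k}$ via its nonzero diagonal, attainment of the supremum, and the spectral identity $\lambda_{\min}(C_{n,k})=1/\lambda_{\max}(B_{n,k})$) are sound and merely make explicit what the paper leaves implicit in the preceding reduction.
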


\begin{proof} Applying (\ref{pdmQF}) with $G=B_{n,k}$ and $G=C_{n,k}$ we obtain (\ref{MnkBPa}) and (\ref{MnkBPb}), respectively. Since left (right) inequality in (\ref{pdmQF}) reduces to an equality if and only if $\mathbf{x}$ is the eigenvector corresponding to the minimal (maximal) eigenvalue of the matrix $G$, we obtain the extremal   polynomial $p^*(t)$ given by \eqref{Expol}.
\end{proof}

In the following sections, we analyze the the corresponding extremal problems with the Hermite, the generalized Laguerre and the Jacobi weights, including  results on algorithms for numerical computation of the best constants, as well as their lower and upper bounds, asymptotic behaviour, etc.~obtained by several authors (cf. Milovanovi\'c \cite{RJ50}, D\"{o}rfler \cite{Dorf90}, \cite{Dorf91a}, \cite{Dorf2002}, Draux and Kaliaguine \cite{Draux2000},  \cite{Draux2006}, B\"{o}ttcher and D\"{o}rfler \cite{BotDor2010,BotDor2010a}, Aptekarev at al. \cite{aptekar2000},  \cite{aptekar2015},  \cite{aptekar2018},  Nikolov and Shadrin \cite{GNAS2017,GNAS2019}, etc.). 

 The elements of the matrix $A_{n,k}$,  defined  in (\ref{Ank-mat}),  are practically given by  formulas derived in Section~\ref{SEC3} for all $w\in CW$.  

\section{Extremal Problem With the Hermite Weight}\label{SEC5} 

We consider now the Hermite weight $w(t)={\E}^{-t^2}$ on $(-\infty,+\infty)$.

\begin{theorem}\label{ESscmidtHighDer}
Let $X=L^2[(-\infty,+\infty);w]$, with the Hermite weight $w(t)={\E}^{-t^2}$. Then the best constant in $(\ref{MEProb1})$  is given by
\begin{equation}\label{BConstH}
M_{n,k}=2^{k/2}\sqrt{\frac{n!}{(n-k)!}},\quad 1\le k\le n,	
\end{equation} 
with the extremal polynomial $p^*(t)=cH_n(x)$, where $c$ is a non-zero constant. 	
\end{theorem}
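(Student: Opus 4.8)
The plan is to apply Theorem \ref{thm323} with the explicit differentiation formula \eqref{HermDkn} for the orthonormal Hermite polynomials. By \eqref{HermDkn} we have $h_n^{(k)}(t) = 2^{k/2}\sqrt{n(n-1)\cdots(n-k+1)}\,h_{n-k}(t)$, so the derivative of each basis polynomial is a \emph{single} scalar multiple of one lower orthonormal polynomial. Translating this into the matrix $A_{n,k}$ of the operator $\Cale D^k:\PP_n\setminus\PP_{k-1}\to\PP_{n-k}$ from \eqref{Ank-mat}, the entries \eqref{elemAnk} are $a_{i,j} = (h_j^{(k)},h_{i-1})_w = 2^{k/2}\sqrt{j(j-1)\cdots(j-k+1)}\,\delta_{i-1,j-k}$, which means $A_{n,k}$ is in fact \emph{diagonal} (after the index shift): its only nonzero entries sit in positions $(i,j)$ with $j = i+k-1$, i.e. on the main diagonal of the square matrix, with values $d_i := 2^{k/2}\sqrt{(i+k-1)(i+k-2)\cdots i}$ for $i=1,\ldots,n-k+1$.

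Consequently $B_{n,k} = A_{n,k}^T A_{n,k}$ is the diagonal matrix with entries $d_i^2 = 2^k\,(i+k-1)(i+k-2)\cdots i$. Its eigenvalues are exactly these diagonal entries, and the largest one is obtained at $i = n-k+1$, giving $\lambda_{\max}(B_{n,k}) = 2^k\,n(n-1)\cdots(n-k+1) = 2^k\,n!/(n-k)!$. By \eqref{MnkBPa} of Theorem \ref{thm323},
\[
M_{n,k} = \sqrt{\lambda_{\max}(B_{n,k})} = 2^{k/2}\sqrt{\frac{n!}{(n-k)!}},
\]
which is \eqref{BConstH}. For the extremal polynomial, the eigenvector of $B_{n,k}$ associated with $\lambda_{\max}$ is the standard basis vector $\mathbf{e}_{n-k+1}$ (assuming, as is the case, that the largest diagonal entry is attained only at $i=n-k+1$, since the products $(i+k-1)\cdots i$ are strictly increasing in $i$), so by \eqref{Expol} the extremal polynomial is $p^*(t) = c\,h_n(t)$, equivalently $p^*(t) = cH_n(t)$ after rescaling the constant.

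There is essentially no obstacle here: the content is entirely contained in the differentiation formula \eqref{HermDkn}, whose special feature — that the $k$-th derivative of an orthonormal Hermite polynomial is proportional to a \emph{single} orthonormal polynomial rather than a genuine linear combination — collapses $A_{n,k}$ to a diagonal matrix and makes the eigenvalue problem trivial. The only point that needs a word of care is the strict monotonicity of $i\mapsto (i+k-1)(i+k-2)\cdots i$ on $\{1,\ldots,n-k+1\}$, which guarantees both that the supremum in \eqref{MEProb1} is attained and that the extremal polynomial is unique up to a scalar multiple, namely $cH_n(t)$. One may also note in passing that this value is consistent with Theorem \ref{thm:AM}: equality in \eqref{eq:AM} holds for $P = cQ_n$, and substituting the Hermite data $\lambda_n = 2n$, $B'(0) = -2$, $A\equiv 1$ recovers the same constant.
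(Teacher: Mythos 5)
Your proof is correct and follows essentially the same route as the paper: the differentiation formula \eqref{HermDkn} makes $A_{n,k}$ diagonal, hence $B_{n,k}=A_{n,k}^TA_{n,k}$ is diagonal with eigenvalues $2^k(i+k-1)\cdots i$, the maximum of which is attained at $i=n-k+1$, yielding \eqref{BConstH} via \eqref{MnkBPa} and the extremal polynomial $cH_n$. Your added remark on the strict monotonicity of the diagonal entries (giving uniqueness of the extremizer up to a scalar) is a small but welcome refinement of the paper's argument.
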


\begin{proof} According to (\ref{HermDkn}),   $A_{n,k}$ is a diagonal matrix of order $n-k+1$, with diagonal elements 
\[a_{i,k+i-1}=2^{k/2}\sqrt{j(j-1)\cdots(j-k+1)}\quad (j=k+i-1),\]
so that the matrix $B_{n,k}=A_{n,k}^TA_{n,k}$ in (\ref{MEProbMODfs}) is also diagonal, with  
eigenvalues $\lambda(B_{n,k})=2^k (k+i-1)\cdots i$, \ $i=1,\ldots,n-k+1$. Thus, using (\ref{MnkBPa}), we get the best constant 
\begin{align*}
M_{n,k}&=\sqrt{\max_{1\le i\le n-k+1}2^k (k+i-1)(k+i-2)\cdots i}\\[1mm]
& =
2^{k/2}\sqrt{n(n-1)\cdots(n-k+1)},	
\end{align*}
i.e.,  
(\ref{BConstH}). The extremal polynomial is $p^*(t)=cH_n(x)$, where $c$ is a non-zero constant. 
\end{proof}

The best constant in  Markov's  extremal problem 
on $L^2[(-\infty,+\infty);{\E}^{-t^2}]$
for the first derivative $M_n=M_{n,1}=\sqrt{2n}$ was solved in 1944 by E.\ Schmidt \cite{Schm} and later by Tur\'an \cite{tur1960}.

\begin{remark}
Theorem \ref{ESscmidtHighDer} was also proved  by Shampine \cite{Shamp,Shamp65}, D\"orfler \cite{Dorf87},
Milovanovi\'c \cite{RJ50}, as well as by Guessab and Milovanovi\'c \cite{RJ68,RJ72} using different methods. For other results in this subject see \cite{AllalJAT94,AllalAMH95}. 
\end{remark}

\section{Extremal Problem With the Generalized Laguerre Weight}\label{SEC6}

For considering Markov's extremal problems (\ref{MEProb1}) on the space $X=L^2[(0,+\infty);w]$, with the generalized Laguerre weight $w(t)=t^s{\E}^{-t}$ \ $(s>-1)$ and the 
inner product defined by (\ref{innPrCOP}), we use the expansion (\ref{expGLPon}) for an arbitrary $s>-1$ and (\ref{expLPons0}) for $s=0$.

Thus, the elements (\ref{elemAnk}) of the matrix $A_{n,k}=A_{n,k}(s)$ can be calculated as
\[a_{i,j}=c_{i-1,j}^{(k)}(s),\quad i=1,\ldots,n-k+1;\ j=k,\ldots,n,\]
using the explicit expression for $c_{\nu,n}^{(k)}(s)$ given in (\ref{expGLPon}).   
The following  commands in Wolfram's {\sc Mathematica} 12.3 provide this calculation, where the upper triangular matrix (\ref{Ank-mat}) is denoted  by {\tt Ank[n,k,s]},
 \begin{verbatim}
  Ce[v_,n_,k_,s_] := (-1)^(n-k-v) Sqrt[Pochhammer[v+1,n-v]/
      Pochhammer[v+s+1,n-v]]]Binomial[n-v-1,k-1]; 
  Ank[n_,k_,s_] :=Table[If[i==1, Table[Ce[0,j,k,s],{j,k,n}], 
      Join[Table[0,{j,k,k+i-2}], 
      Table[Ce[i-1,j,k,s],{j,k+i-1,n}]]],{i,1,n-k+1}]; 	
 \end{verbatim} 
as well as the matrices $B_{n,k}=B_{n,k}(s)$ and/or
  $C_{n,k}=C_{n,k}(s)$, that appear in Theorem \ref{thm323}. For example, for  given $n=6$, $k=2$ and $s=1$, the matrix $C_{n,k}(s)$, its minimal eigenvalue and the best constant $M_{n,k}=M_{n,k}(s)$ can be also obtained by the following commands in {\sc Mathematica},
\begin{verbatim}
  n=6; k=2; s=1; 
  Aa=Ank[n,k,s]; AaI=Inverse[Aa]; Cnk=Transpose[AaI].AaI;
  {lamdamim}=Eigenvalues[N[Cnk,16],-1][[1]]; 
  Mnk=1/Sqrt[lambdamin]; 	
 \end{verbatim}  
i.e.,
\[A_{6,2}(1)=\left[
\begin{array}{ccccc}
 \frac{1}{\sqrt{3}} & -1 & \frac{3}{\sqrt{5}} & -2 \sqrt{\frac{2}{3}} \
& \frac{5}{\sqrt{7}} \\[2.5mm]
 0 & \frac{1}{\sqrt{2}} & -2 \sqrt{\frac{2}{5}} & \sqrt{3} & -4 \
\sqrt{\frac{2}{7}} \\[2.5mm]
 0 & 0 & \sqrt{\frac{3}{5}} & -\sqrt{2} & 3 \sqrt{\frac{3}{7}} \\[2.5mm]
 0 & 0 & 0 & \sqrt{\frac{2}{3}} & -\frac{4}{\sqrt{7}} \\[2.5mm]
 0 & 0 & 0 & 0 & \sqrt{\frac{5}{7}} 
\end{array}\right] \]
and
\[
C_{6,2}(1)=\left[
\begin{array}{ccccc}
 3 & 3 \sqrt{2} & \sqrt{3} & 0 & 0 \\[2mm]
 3 \sqrt{2} & 8 & 4 \sqrt{\frac{2}{3}}+\sqrt{6} & \sqrt{2} & 0 \\[2mm]
 \sqrt{3} & 4 \sqrt{\frac{2}{3}}+\sqrt{6} & 8 & 3 \sqrt{3} & \
\sqrt{\frac{5}{3}} \\[2mm]
 0 & \sqrt{2} & 3 \sqrt{3} & \frac{15}{2} & \
\frac{6}{\sqrt{5}}+\sqrt{5} \\[2mm]
 0 & 0 & \sqrt{\frac{5}{3}} & \frac{6}{\sqrt{5}}+\sqrt{5} & \
\frac{36}{5}
\end{array}\right],\]
as well as
\[\lambda_{\min}(C_{6,2}(1))=0.036487159752501  \ \ \mbox{and}\ \  M_{6,2}(1)=5.235160139118.\]
Note that  $C_{6,2}(1)$ is a symmetric five-diagonal matrix.

In 1960 P.\ Tur\'an \cite{tur1960} obtained the sharp constant for the first derivative $(k=1)$ in the explicit form for $s=0$, i.e., when $X=L^2[(0,+\infty);{\E}^{-t}]$:
\begin{equation}\label{TuranExtrConst}
M_{n,1}(0)=M_n(0)=\left(2\sin \frac\pi{4n+2}\right)^{-1},
\end{equation} 
as well as the extremal polynomial  
\begin{equation}\label{TuranExtrPol}
p^*(t)=\sum_{\nu=1}^n \sin\frac{\nu\pi}{2n+1}L_{\nu}(t),
\end{equation}
where $L_{\nu}$ is the Laguerre polynomial.	
Schmidt \cite{Schm} also considered this problem and  obtained that
\[M_n=\frac{2n+1}\pi\left(1-\frac{\pi^2}{24(2n+1)^2}+\frac{R}{(2n+1)^4}\right)^{-1},\]
where $-8/3<R<4/3$.

We note that in this simplest case $(k=1,\, s=0)$, according to (\ref{expLPons0}),  the matrices $A_{n,1}(0)$ and $C_{n,1}(0)$ are given by
\[
A_{n,1}(0)=\left[\begin{array}{cccc}
a_{1,1}&a_{1,2}&\cdots&a_{1,n}\\[2mm]
0      &a_{2,2}&\cdots&a_{2,n}\\[2mm]	
\vdots&\vdots&\ddots&\vdots\\[2mm]
0&0&\cdots&a_{n,n}
\end{array}\right]= \left[\begin{array}{crcc}
1&-1&\cdots&(-1)^{n-1}\\[2mm]
0      &1&\cdots&(-1)^n\,\\[2mm]	
\vdots&\vdots&\ddots&\vdots\\[2mm]
0&0&\cdots&1
\end{array}\right]	
\]
and 
\begin{equation}\label{Cn1Jac}
C_{n,1}(0)=\bigl(A_{n,1}(0)^T\bigr)^{-1} A_{n,1}(0)^{-1}=\left[
\begin{array}{crcrrr}
 1 &  1 &  0 & \cdots &  0 &  0\\[2mm]
 1 &  2 &  1 & \cdots &  0 &  0\\[1mm]
 0 & \, 1 &  2 & \ddots &  0 & 0\\[1mm]
\vdots &  \vdots & \ddots & \ddots &  1 & 0 \\[2mm]
 0 &  0 & 0 & 1 & 2 &  1 \\[2mm]
 0 &  0 &  0 & 0 & 1 &  2 \\
\end{array}\right].	
\end{equation}
The last tridiagonal matrix  can be interpreted as a Jacobi matrix $J_n(\widetilde w)$ given by (\ref{JacobiMAT}) for certain sequence of monic orthogonal polynomials $\pi_\nu(x)$, $\nu=0,1,\ldots,n-1$,  with respect to some weight function $x\mapsto \widetilde w(x)$.  These polynomials satisfy the three-term 
recurrence relation
\begin{equation}\label{eqRRel}
\pi_{\nu+1}(x)=(x-\alpha_\nu)\pi_{\nu}(x)-\beta_\nu\pi_{n-1}(x),\quad \nu=1,\ldots,n-1, 	
\end{equation}
with $\pi_0(x)=1$, $\pi_{-1}(x)=0$, where the coefficients  are
$\alpha_0=1$, $\alpha_\nu=2$, $\beta_\nu=1$, $\nu=1,\ldots,n-1$. Putting $2-x=2\cos\theta$, i.e., $x=4\sin^2(\theta/2)$, \eqref{eqRRel} is a linear difference equation of the second order, whose general solution is given by $\pi_\nu(x)=(-1)^\nu\bigl[C_1\cos\nu\theta+C_2\sin\nu\theta\bigr]$, where $C_1$ and $C_2$ are arbitrary constants. 

Since $\pi_0(x)=1$ and $\pi_1(x)=x-1=1-2\cos\theta$, we  
obtain the particular solution
\begin{equation}\label{Resenje}
\pi_\nu(x)=(-1)^\nu \left[\cos\nu\theta-\tan\frac{\theta}{2}\,\sin\nu\theta\right]=(-1)^\nu \frac{\cos\left(\nu+\frac12\right)\theta}{\cos\frac{1}{2}\theta},
\end{equation}
which enables us to find the eigenvalues of the matrix $C_{n,1}(0)$ as the zeros of the polynomial $\pi_n(x)$
in the explicit form,
\[x_\nu=4\sin^2\frac{\theta_\nu}{2}=4\sin^2\frac{(2\nu-1)\pi}{2(2n+1)},\quad \nu=1,\ldots,n.\]
Since the minimal eigenvalue is $x_1$, i.e.,
\[\lambda_{\min}(C_{n,1}(0))=4\sin^2\frac{\pi}{2(2n+1)},\]
according to (\ref{MnkBPb}), the best constant in Tur\'an's case is given by \eqref{TuranExtrConst}.

It is easy to check that the $p^*(t)$, given by  (\ref{TuranExtrPol}), is an extremal polynomial on $X=L^2[(0,+\infty);{\E}^{-t}]$.

\begin{remark}
The polynomials $\pi_\nu(x)$ in 	(\ref{Resenje}) can be expressed in terms of Chebyshev polynomials of the third kind $V_\nu$, which are  given by 
\[V_\nu(x)=\frac{\nu!}{\left(\frac12\right)_\nu}\,P_\nu^{(-1/2,1/2)}(x)=\frac{\cos\left(\nu+\frac12\right)\theta}{\cos\frac{1}{2}\theta}\]	
 Indeed, in this case, 
\[\cos\theta=1-2\sin^2\frac{\theta}{2}=1-\frac{x}{2},\]
and we have  
\[\pi_\nu(x)=(-1)^\nu V_\nu\left(1-\frac{x}{2}\right).\]
The corresponding weight function is $x\mapsto \widetilde w(x)=\sqrt{(4-x)/x}$, supported on $[0,4]$.
\end{remark}

Now we consider the extremal problem (\ref{MEProb1}) for 
$k=1$ on the space $X=L^2[(0,+\infty);w]$, with the generalized Laguerre weight $w(t)=t^s{\E}^{-t}$ \ $(s>-1)$. 
First, we define the sequence $\beta_i=1+s/i$, \ $i=1,\ldots,n$, so that the elements (\ref{elemAnk}) of the matrix $A_{n,1}(s)$ can be expressed in the form
\begin{equation}\label{aij1s}
a_{i,j}=c_{i-1,j}^{(1)}(s)=(-1)^{j-i}\sqrt{\frac{(i)_{j-i+1}}{(i+s)_{j-i+1}}}=\frac{(-1)^{j-i}}{\sqrt{\beta_i\,\beta_{i+1}\,\cdots \,\beta_j}}, \end{equation}
for each $i=1,2,\ldots,n$ and  $j=i,\ldots,n$, and we can 
formulate the following result:

\begin{theorem}\label{GenLags}
Let $X=L^2[(0,+\infty);w]$, with the generalized Laguerre weight function $w(t)=t^s{\E}^{-t}$, \ $s>-1$. Then the tridiagonal matrix $C_{n,1}$  is given by 
\begin{equation}\label{JacobiMATCn1s}
 C_{n,1}=C_{n,1}(s)=\left[\begin{array}{ccccc}
 \alpha_0&\sqrt{\beta_1}&&&{\mathbf{O}}\\[1mm]
  \sqrt{\beta_1}&\alpha_1&\sqrt{\beta_2}\\
 &\sqrt{\beta_2}&\alpha_2&\ddots  \\
  &&\ddots&\ddots&\sqrt{\beta_{n-1}}\\[1mm]
{\mathbf{O}}&&&\sqrt{\beta_{n-1}}&\alpha_{n-1}
\end{array}\right],
 \end{equation}
where
\[\alpha_0=1+s,\quad \alpha_\nu=2+\frac{s}{\nu+1},\quad \beta_\nu=1+\frac{s}{\nu},\quad \nu=1,\ldots,n-1,\]
and the corresponding monic orthogonal polynomials $\pi_\nu(x)$, $\nu=0,1,\ldots$, satisfy the three term recurrence relation
\begin{align}\label{eqRRelRep}
\pi_{\nu+1}(x)&=(x-\alpha_\nu)\pi_\nu(x)	-\beta_\nu\pi_{\nu-1}(x),\quad \nu=0,1,\ldots,\\
\pi_0(x)&=1,\ \ \pi_{-1}=0.\nonumber
\end{align}
The best constant is given by
$$M_{n}=M_n(s)=\frac{1}{\sqrt{\lambda_{\min}(C_{n,1}(s))}}=\frac{1}{\sqrt{x_1}},$$
 where $x_1=x_1^{(n)}$ is minimal zero of the polynomial $\pi_n(x)$. 	
\end{theorem}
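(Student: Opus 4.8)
The plan is to compute the matrix $C_{n,1}(s)=\bigl(A_{n,1}(s)^T\bigr)^{-1}A_{n,1}(s)^{-1}$ explicitly and recognize it as a Jacobi matrix, so that the best constant follows immediately from Theorem~\ref{thm323} in the form \eqref{MnkBPb}. The key observation is the factored form of the entries recorded in \eqref{aij1s}: with $\beta_i=1+s/i$ and $g_j=\beta_1\beta_2\cdots\beta_j$ (set $g_0=1$), one has $a_{i,j}=(-1)^{j-i}\sqrt{g_{i-1}/g_j}$ for $i\le j$. Pulling out the diagonal scaling $D=\operatorname{diag}\bigl(\sqrt{g_0},\sqrt{g_1},\ldots,\sqrt{g_{n-1}}\bigr)$ and the sign matrix $S=\operatorname{diag}\bigl((-1)^0,(-1)^1,\ldots\bigr)$, I would write $A_{n,1}(s)=S\,D\,U\,D^{-1}S$ up to reindexing, where $U=[u_{i,j}]$ is the upper-triangular all-ones matrix $u_{i,j}=1$ for $i\le j$ (the same matrix appearing in the $s=0$ case, cf.\ the displayed $A_{n,1}(0)$). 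The inverse of the all-ones upper-triangular matrix is the bidiagonal matrix with $1$ on the diagonal and $-1$ on the first superdiagonal, a standard and easily verified fact.

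Next I would assemble $C_{n,1}(s)=\bigl(A_{n,1}(s)^T\bigr)^{-1}A_{n,1}(s)^{-1}$. Since conjugation by the orthogonal matrix $S$ and by the diagonal $D$ only introduces diagonal and sign factors, the product collapses to $C_{n,1}(s)=D^{-1}\bigl(U^{-T}\Lambda U^{-1}\bigr)D^{-1}$ for an explicit diagonal $\Lambda$ coming from $D$ acting between the two factors; carrying out the bidiagonal multiplication $U^{-T}\Lambda U^{-1}$ gives a tridiagonal matrix whose entries are sums and differences of consecutive $g_j$'s, and the outer $D^{-1}$ rescaling converts the ratios $g_{j}/g_{j-1}=\beta_j$ into exactly the advertised $\alpha_\nu$ and $\sqrt{\beta_\nu}$. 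Concretely, the diagonal entry in row $\nu+1$ will come out as $\beta_{\nu+1}^{-1}+\beta_\nu$-type combination that simplifies, using $\beta_i=1+s/i$, to $\alpha_\nu=2+s/(\nu+1)$ for $\nu\ge1$ and $\alpha_0=1+s$, while the off-diagonal entry is $\sqrt{\beta_\nu}$. The $s=0$ specialization then recovers the matrix \eqref{Cn1Jac}, which is a useful consistency check.

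Having identified $C_{n,1}(s)$ with the Jacobi matrix \eqref{JacobiMATCn1s}, the interpretation in terms of monic orthogonal polynomials is immediate: by the correspondence recalled around \eqref{JacobiMAT}, the characteristic polynomial of $C_{n,1}(s)$ is the degree-$n$ monic polynomial $\pi_n(x)$ generated by the three-term recurrence \eqref{eqRRelRep} with the stated coefficients, and its eigenvalues are precisely the zeros $x_1<x_2<\cdots<x_n$ of $\pi_n$. All these matrices are positive definite (they are $(A^TA)^{-1}$ for invertible $A$), so the zeros are positive and, in particular, $\lambda_{\min}(C_{n,1}(s))=x_1>0$. Applying \eqref{MnkBPb} with $k=1$ yields $M_n(s)=1/\sqrt{\lambda_{\min}(C_{n,1}(s))}=1/\sqrt{x_1}$, which is the claim.

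The main obstacle is purely bookkeeping: getting the index shifts right when passing from the entries $a_{i,j}$ (indexed $1\le i\le j\le n$) to the recurrence coefficients (indexed $\nu=0,1,\ldots,n-1$), and carefully tracking how the three diagonal scalings $D$, $S$ interact through the transpose-inverse so that the off-diagonal comes out as $\sqrt{\beta_\nu}$ rather than $\beta_\nu$ or $1$. Once the factorization $A_{n,1}(s)=S\,D\,U\,D^{-1}\,S$ (with the correct reading of which $D$ sits on which side) is pinned down, everything else is the elementary inversion of a bidiagonal matrix and algebraic simplification of $1+s/i$ expressions; no genuinely hard step remains.
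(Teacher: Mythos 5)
Your proposal is correct and follows essentially the same route as the paper: both compute $A_{n,1}(s)^{-1}$ explicitly as the bidiagonal matrix with $\sqrt{\beta_i}$ on the diagonal and $1$ on the superdiagonal, form $C_{n,1}=\bigl(A_{n,1}^{-1}\bigr)^T A_{n,1}^{-1}$, read off the Jacobi matrix \eqref{JacobiMATCn1s}, and conclude via \eqref{MnkBPb}. The only (cosmetic) difference is that you obtain the inverse through the scaling factorization $A_{n,1}(s)=S\,D\,U\,D^{-1}S$ and the known inverse of the all-ones upper-triangular matrix, whereas the paper verifies directly that $\sqrt{\beta_i}\,y_i+y_{i+1}=c_i$ solves the triangular system \eqref{sysEqY-C}.
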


\begin{proof} According to \eqref{aij1s} and the system of equations 
\eqref{sysEqY-C} for $k=1$, we have
\[y_i=\sum_{j=i}^n \frac{(-1)^{j-i}}{\sqrt{\beta_i\,\beta_{i+1}\,\cdots \,\beta_j}},\quad i=1,\ldots,n.\]
Putting $y_{n+1}=0$, we see that for each $i=1,\ldots,n$,
\[
\sqrt{\beta_i}\,y_i+y_{i+1}=\sqrt{\beta_i}\sum_{j=i}^n \frac{(-1)^{j-i}}{\sqrt{\beta_i\,\beta_{i+1}\,\cdots \,\beta_j}}\,c_j-
\sum_{j=i+1}^n \frac{(-1)^{j-i}}{\sqrt{\beta_{i+1}\,\cdots \,\beta_j}}\,c_j=c_i.\] 

These equations  provide the unique solution of
the upper triangular system of equations (\ref{sysEqY-C}) in the matrix form  $\mathbf{c}=A_{n,1}^{-1}\mathbf{y}$, where
\[A_{n,1}^{-1}=\left[\begin{array}{ccccc}
\sqrt{\beta_1}&1&&&{\mathbf{O}}\\[1mm]
 & \sqrt{\beta_2}&1& \\
 &&\ddots&\ddots  \\[1mm]
  &&&\sqrt{\beta_{n-1}}&1\\[1mm]
{\mathbf{O}}&&&&\sqrt{\beta_{n}}
\end{array}\right].
 \]
 
Using this two-diagonal matrix and its transposed matrix $\bigl(A_{n,1}^{-1}\bigr)^T=\bigl(A^T_{n,1}\bigr)^{-1}$ we get the tridiagonal matrix $C_{n,1}=\bigl(A_{n,1}^T\bigr)^{-1} A_{n,1}^{-1}$ given by (\ref{JacobiMATCn1s}), where 
\[\alpha_0=\beta_1=1+s\quad\mbox{and}\quad  \alpha_\nu=1+\beta_{\nu+1}=2+\frac{s}{\nu+1},\quad \nu=1,\ldots,n-1.\] 

As we mentioned before, the  
tridiagonal matrix $C_{n,1}$ can be interpreted as a Jacobi matrix  for certain sequence of monic orthogonal polynomials $\pi_\nu(x)$, $\nu=0,1,\ldots,n-1$, which satisfy the three-term 
recurrence relation (\ref{eqRRelRep}). Note that for $s=0$, (\ref{JacobiMATCn1s}) reduces to (\ref{Cn1Jac}).

The zeros of $\pi_n(x)$ are mutually different real numbers and they  coincide with the eigenvalues of the Jacobi matrix \eqref{JacobiMAT}, in this case the matrix $C_{n,1}$.
The final conclusion of this theorem follows directly from 	Theorem \ref{thm323}.
\end{proof}

\begin{remark}
For $n=1$ and $n=2$, the best constants are
\[M_{1}(s)=\frac{1}{\sqrt{s+1}}\quad\mbox{and}\quad 
M_2(s)=\sqrt{\frac{3 (s+2)+\sqrt{(s+2) (s+10)}}{2(s+1) (s+2)}}.\]	
\end{remark}

According to Theorem \ref{GenLags}, an efficient software package {\tt OrthogonalPoly\-nomials} (see \cite{CveMil2004,MilCve2012}) in Wolfram's {\sc Mathematica}, developed for constructing   orthogonal polynomials and quadrature formulas of Gaussian type, can be used very easy for finding the best constants $M_n=M_{n}(s)$.

For example, if we want to calculate the best constants $M_{n}(s)$ for arbitrary $n\le 200$ we need the following sequence of commands:
\begin{verbatim}
  << orthogonalPolynomials`
     alpha[s_]:=Table[If[k==0,1+s,2+s/(k+1)], {k,0,199}]; 
     beta[s_]:=Table[If[k==0,1,1+s/k], {k,0,199}];
     Minzero[n_,s_]:=aZero[n,alpha[s],beta[s],
           WorkingPrecision->30,Precision->25][[1]];
     Mn[n_,s_]:=1/Sqrt[Minzero[n,s]]	
\end{verbatim}

Note that we put $\beta_0=1$ ({\tt beta[s][[1]]=1}), which is not important (see a comment in Section \ref{SEC2}). The routine {\tt aZero} computes all zeros of an orthogonal polynomial of degree $n$ as an increasing sequence. Graphics of $s\mapsto M_n(s)$ for
$n=10,50,100,150$  and $s\in(-1,4)$ are displayed in Figure~\ref{slAn}. 
\begin{figure}[h]
 \center
\includegraphics[width=0.75\textwidth]{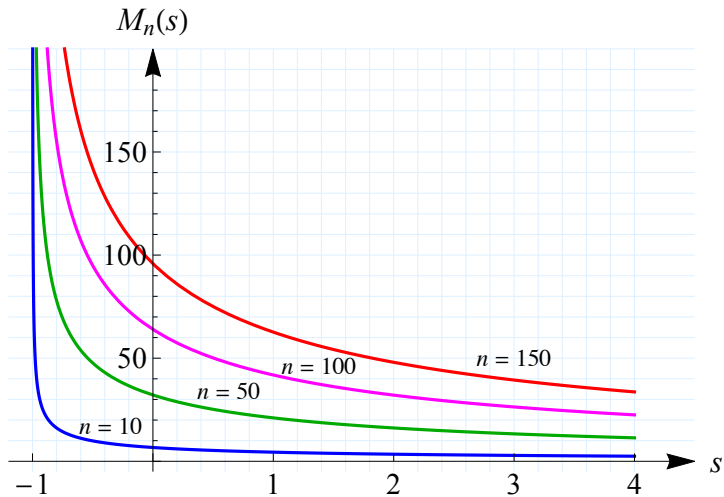}%
\caption{Best constants $M_n=M_n(s)$ for $n=10,50,100$ and $150$, when $s\in(-1,4)$}\label{slAn}
\end{figure}

There are several papers in which the authors give different estimates for the best constant $M_n(s)$ for the generalized Laguerre weight function. 

For example,  D\"orfler \cite{Dorf91a} determined certain lower and upper bounds for the best constant $M_n=M_n(s)$ in the form
\[\frac{1}{s+3}\left[\frac{n^2}{s+1}+\frac{(2s^2+5s+6)n}{3(s+1)(s+2)}+\frac{s+6}{3(s+2)}\right]	\le M_n(s)^2\le \frac{n(n+1)}{2(s+1)}
\]
for each $s>-1$, from which he obtained
\[\frac{1}{\sqrt{(s+1)(s+3)}}\le\varliminf_{n\to+\infty}\frac{M_{n}(s)}{n}\le\varlimsup_{n\to+\infty}\frac{M_n(s)}{n}\le
\frac{1}{\sqrt{2(s+1)}}.\]

In \cite{Dorf2002} D\"orfler proved that
\[M(s)=\lim_{n\to+\infty}\frac{M_n(s)}{n}=\frac{1}{j_{(s-1)/2,1}},\]
where $j_{\nu,1}$is the first positive zero of the Bessel function $J_\nu(x)$ of the first kind and order $\nu$. For
$s=0$ it gives $M(0)=1/j_{-1/2,1}=2/\pi$.

As an interesting result we mention a connection between the polynomials $\pi_\nu(x)$ from Theorem \ref{GenLags} and the {\it Pollaczek polynomials} $P_\nu(x):=P_\nu^\lambda(x;a,b)$, defined  by the recurrence relation \cite[p.~ 84]{Chi1978},
\[(\nu+1)P_{\nu+1}(x)=2[(a+\lambda+\nu)x+b]P_\nu(x)-(\nu+2\lambda-1)P_{\nu-1}(x),\quad \nu\ge0,\]
with $P_0(x)=1$ and $P_{-1}(x)=0$, in the form
\[\pi_\nu(x)= (-1)^n\frac{\nu+1}{s}\,P_{\nu+1}^{s/2}(1-x/2;-s/2,s/2)\]
(see \cite[Lemma 1]{Dorf2002}). This choice $\lambda=-a=b=s/2$ of parameters in $P_\nu^\lambda(x;a,b)$ causes the corresponding Pollaczek polynomials to be no longer orthogonal. In that case  $\deg P_\nu(x)=\nu-1$ for $\nu\ge1$ (see  \cite[Remark 3]{Dorf2002}).

More general problem was considered by Aptekarev, Draux, and Toulya\-kov \cite{aptekar2002} for the so-called {\it co-recursive Pollaczek polynomials} $A_n(x)$, defined by
\begin{equation}\label{AptAn}
A_{\nu+1}(x)=\bigl[x-\bigl(a+b+bc_{\nu+1}\bigr)\bigr]A_{\nu}(x)-ab(1+c_{\nu})A_{\nu-1}(x), 
\end{equation}
with $A_0(x)=1$ and $A_1(x)=x-b(1+c)$, where $c_\nu=c/\nu$ $(\nu\ge1)$ and $a,b$ and $c+1$ are positive numbers with $c\ne 0$. They studied the measure of orthogonality of these polynomials $A_\nu(x)$ outside the interval supporting the absolute continuous part of the measure of orthogonality of the corresponding non-perturbed system, i.e.,  the case  $c = 0$, when  \eqref{AptAn} reduces to 
\[A_{\nu+1}(x) =\bigl[x- (a+b)\bigr]A_{\nu}(x)-abA_{\nu-1}(x),\quad \nu\ge 1,\]
with $A_0(x) =1$ and  $A_1(x)=x-b$, and the interval of orthogonality 
\[\Delta=[\delta_1,\delta_2]=\Bigl[\bigl(\sqrt{a}-\sqrt{b}\bigr)^2,\bigl(\sqrt{a}+\sqrt{b}\bigr)^2\Bigr].\]
In their interesting paper the authors obtained conditions on the parameters $a$, $b$ and $c$ in order that the measure of orthogonality for the polynomials $A_\nu(x)$ on the intervals $(-\infty,\delta_1)$
 and $(\delta_2,+\infty)$  possesses (i) no mass point, or (ii) a single mass point, or (iii) infinitely many (convergent) mass points. Moreover, they determined the explicit position of these mass points.
 In an important  particular case for extremal problems, when $a=b=1$ and $c=s>-1$,  the polynomials $A_\nu(x)$ reduce  to  the polynomials $\pi_\nu(x)$ from Theorem \ref{GenLags}. A discrete Markov-Bernstein inequality was also treated in \cite{aptekar2002}. Some results concerning bounds on the  smallest zero of the polynomial $A_n(x)$ and its asymptotic behavior  were  obtained in \cite{aptekar2000}. In   \cite{Draux2000} Draux used some numerical methods ({\it qd} algorithm, fixed point methods of increasing order, and Laguerre's method) in order to obtain estimations for the polynomial zeros, as well as   an improvement of the estimate for the best constants. 

Recently, Nikolov and Shadrin \cite{GNAS2017} proved the following estimates for all $s>-1$ and $n\ge 3$
\[\frac{2\left(n+\frac{2}{3}s\right)\left(n-\frac{1}{6}(s+1)\right)}{(s+1)(s+5)}\le M_n(s)^2\le
\frac{(n+1)\left(n+\frac{2}{5}(s+1)\right)}{(s+1)\left((s+3)(s+5)\right)^{1/3}},\]
 where for the left-hand side inequality it is additionally assumed that $n\ge (s+1)/{6}$.	
Nikolov and Shadrin also obtained bounds for the limit value $M(s)=(j_{(s-1)/2,1})^{-1}$  in the form
\[\sqrt{\frac{2}{(s+1)(s+5)}}<M(s)<\frac{2}{s+2\pi-2},\quad s>1.\] 

The extremal problem \eqref{MEProb1} for the second derivative of polynomials $(k=2)$ was considered in \cite{RJ50} for the standard Laguerre weight function $w(t)={\E}^{-t}$ on $(0,+\infty)$. In that case we have $a_{i,j}=(-1)^{j-i+1}(j-i)$, so that the matrix $A_{n,2}=A_{n,2}(0)$ given by \eqref{Ank-mat} of order $n-1$  becomes
\begin{equation}\label{Ank-matfor0}
A_{n,2}=\left[\begin{array}{rrrrcc}
1&-2&3&\cdots&(-1)^{n-1}(n-2)&(-1)^n(n-1)\\[2mm]
0      &1&-2&\cdots&(-1)^{n-2}(n-3)&(-1)^{n-1}(n-2)\\[2mm]	
0 &0 &1&\cdots&(-1)^{n-3}(n-4)&(-1)^{n-2}(n-3)\\[2mm]
\vdots&\vdots&\vdots&\ddots&\vdots&\vdots\\[2mm]
0&0&0&\cdots&1&-2\ \ \\[2mm]
0&0&0&\cdots&0&1
\end{array}\right],
\end{equation}
and system of equation \eqref{sysEqY-C} can be written in the form
\[y_i=\sum_{j=i+1}^n (j-i)\,c_j,\quad  i=1,2,\ldots,n-1.\]
Introducing $y_n=y_{n+1}=0$, it is easy to see that
\[y_{i+1}+2y_i+y_{i-1}=c_i,\quad i=2,\ldots,n,\]
so that we get the inverse matrix of $A_{n,2}$ as a triangular and tridiagonal matrix
\[A_{n,2}^{-1}=  \left[
\begin{array}{c c c cccc}
 1 & 2 & \ \ \ 1 \ &   &   &  & \mathbf{O}  \\[2mm]
 & 1 & \ \ 2 & \ \ 1 &   &   &   \\[1mm]
  &   & \ \ \ 1 \ & \ \ \ddots & \ \ddots &   & \\
  &   &   & \ \ \ddots &\ \ddots & \ 1 \ &   \\[1mm]
 & &   &  & 1 & \ 2 \ & \ 1  \\[1mm]
  &   &  &  &   & \ 1 \ & \  2  \\[1mm]
 \mathbf{O}& &  &  &  &  & \ 1  \\
\end{array}
\right].\]
Finally, for the matrix $C_{n,2}=
\bigl(A_{n,2}^{-1}\bigr)^T A_{n,2}^{-1}$ we get 
the following five diagonal symmetric matrix of order $n-1$,
\[C_{n,2}=C_{n,2}(0)=\left[\begin{array}{cccccccc}
1&2&1&&&&&{\mathbf{O}}\\[3pt] 
2&5&4&1\\[3pt]
1&4&6&4&1\\ [3pt]
&1&4&6&4&1\\ [3pt]
&&\ddots&\ddots&\ddots&\ddots&\ddots\\ [3pt]
&&&1&4&6&4&1\\ [3pt]
&&&&1&4&6&4\\ [3pt]
{\mathbf{O}}&&&&&1&4&6	
\end{array}\right].\] 

Thus, using the minimal eigenvalue of the matrix $C_{n,2}$, we obtain the best constant
$M_{n,2}=M_{n,2}(0)=\left(\lambda_{\min}(C_{n,2})\right)^{-1/2}$. The best  constants for $n=4(1)10$, $n=20(10)50$, and $n=100$ are presented in Table \ref{tab161}, rounded to ten decimal digits to save space. 
In a similar way, we can also calculate the best constants $M_{n,2}(s)$ for each $s>-1$. The corresponding numerical values for the best constants 
for $s=-1/2,1,2$ are also presented in Table~\ref{tab161}.  

\begin{table}[h]
\caption{Best constants $M_{n,2}(s)$ for $n=4(1)10$, $n=20(10)50$, and  $n=100$, when  $s=0,-1/2,1,2$}
\label{tab161}
\begin{center}
\begin{tabular}{rcccc}
\hline
$n$ & $M_{n,2}(0)$ &$M_{n,2}(-1/2)$ & $M_{n,2}(1)$   & $M_{n,2}(2)$ \\
 \hline
$4$ & $4.402678830$ & $7.599353945$ & $2.343203955$  & $1.563671829$  \\ 
$5$ & $6.961320817$ & $12.12575130$ & $3.649155810$ & $2.405498748$ \\
$6$ & $10.08929121$ & $17.67855617$ &   $5.235160139$ & $3.421631009$ \\
$7$ & $13.78631814$ & $24.25741326$ &  $7.100933576$ & $4.611742398$  \\
$8$ & $18.05229187$ & $31.86216277$ & $9.246350532$ & $5.975669428$ \\ 
$9$ & $22.88716102$ & $40.49272196$ & $11.67134678$ & $7.513319423$  \\
$10$ & $28.29089888$ & $50.14904364$ & $14.37588601$ & $9.224635319$ \\
$20$ & $113.6144212$ & $203.1257944$ & $56.79356117$ & $35.88477549$ \\ 
$30$ & $255.8207947$ & $458.6698911$ & $127.1588062$ & $79.89882644$ \\
$40$ & $454.9097831$ & $816.7805071$ & $225.4710076$ & $141.2653074$ \\ 
$50$ & $710.8813577$ & $1277.457485$ & $351.7300494$ & $219.9838941$  \\
$100$ & $2843.977883$ & $5119.336722$ & $1402.227069$ & $873.8560224$\\ 
\hline 
\end{tabular} 
\end{center}	
\end{table}

\begin{remark}\label{Remark 1.6.1} The last problem can be connected 
with extremal problems of Wirtin\-ger's type (see Milovanovi\'c et al.
\cite[p.~578]{mil1994}, as well as Mitrinovi\'c \cite[p.~150]{MitAI}, Fan et al. \cite{FanTauTodd}, G.~V. Milovanovi\'c and I.~\v Z. Milovanovi\'c \cite{MilMil1}). \end{remark}

For $n=2$ and $n=3$ we  have the exact values: 
\[M_{2,2}(s)=\sqrt{\frac{2}{(s+1)(s+2)}},\quad
M_{3,2}(s)=\sqrt{\frac{2\left(2 s+9+\sqrt{s^2+24 s+72}\,\right)}{(s+1) (s+2) (s+3)}}\,.	\]
 
Note that $M_{2,2}(0)=1$ and $M_{3,2}(0)=1+\sqrt{2}$.

Figure \ref{MN2C} shows graphics of the best constants
$s\mapsto M_{n,2}(s)$ for $n=10$ and $n=50$ in $\log$-scale.
\begin{figure}[h]
 \center
\includegraphics[width=0.7\textwidth]{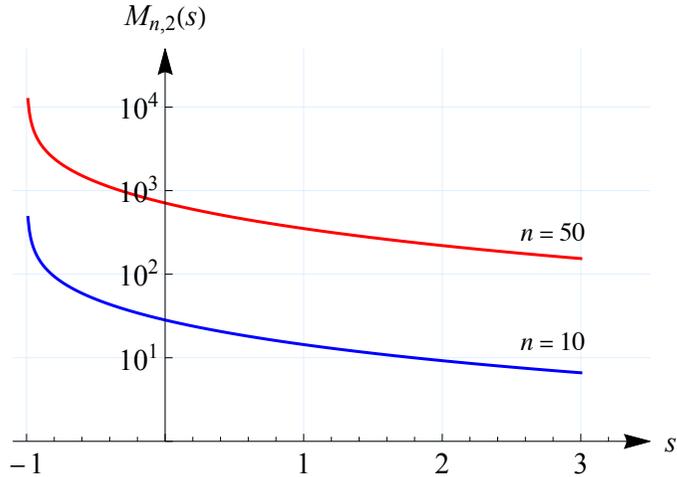} 
\caption{Best constants $s\mapsto M_{n,2}(s)$ in $\log$-scale for $n=10$ and $n=50$, when $s\in(-1,3)$}\label{MN2C}
\end{figure}

\section{Extremal Problem With the Jacobi Weight}\label{SEC7}

In this section we consider  Markov's extremal problem  (\ref{MEProb1}) on the space $X=L^2[(-1,1);(1-t)^\alpha(1+t)^\beta]$, where $\alpha,\beta>-1$. The 
inner product is defined by (\ref{innPrCOP}), and we use the expansion (\ref{expanDkJ}), jointly with the formulas \eqref{cvnabgd} and \eqref{deoverce}--\eqref{lcJpOrtnor}, in order to determine the elements of the matrix $A_{n,k}=A_{n,k}(\alpha,\beta)$. So we have
\[a_{i,j}=d_{i-1,j}^{(k)}(\alpha,\beta),\quad i=1,\ldots,n-k+1;\ j=k,\ldots,n,\]
where 
\[d_{\nu,n}^{(k)}(\alpha,\beta)=k!\binom{n}{k}\frac{\gamma_n}{\gamma_\nu} c_{\nu,n-k}(\alpha,\beta;\alpha+k,\beta+k),\]	
$c_{\nu,n}(\alpha,\beta;\gamma,\delta)$ is given by \eqref{cvnabgd}, and the  leading coefficients $\gamma_n$ of the orthonormal Jacobi polynomials are given by \eqref{lcJpOrtnor} for $n\ge 1$ and by \eqref{lcJpOrtnor0} for $n=0$.

The following  commands in {\sc Mathematica} 12.3 provide these calculations, where the upper triangular matrix (\ref{Ank-mat}) is denoted  by {\tt Ank[n,k,al,be]},
\begin{verbatim}
  c[v_,n_,al_,be_,ga_,de_]:=2^(n-v) Binomial[n,v] 
    Pochhammer[v+ga+1,n-v]/Pochhammer[n+v+ga+de+1,n-v] 
    HypergeometricPFQ[{-n+v,n+v+ga+de+1,v+al+1},
    {v+ga+1,2v+al+be+2},1];
  gammaJ[n_,al_,be_]:=If[n==0, 
    Sqrt[Gamma[al+be+2]/(2^(al+be+1)Gamma[al+1]Gamma[be+1])], 
    Sqrt[(2n+al+be+1)/(n!2^(2n+al+be+1))]Gamma[2n+al+be+1]/
    Sqrt[Gamma[n+al+1]Gamma[n+be+1]Gamma[n+al+be+1]]];
  De[v_,n_,k_,al_,be_]:=k! Binomial[n,k]gammaJ[n,al,be]
    c[v,n-k,al,be,al+k,be+k]/gammaJ[v,al,be]; 
  Ank[n_,k_,al_,be_]:=Table[If[i==1,
     Table[De[0,j,k,al,be],{j,k,n}], 
     Join[Table[0,{j,k,k+i-2}],Table[De[i-1,j,k,al,be],
     {j,k+i-1,n}]]],{i,1,n-k+1}];
\end{verbatim} 
as well as the matrices $B_{n,k}=B_{n,k}(\alpha,\beta)$ and/or
  $C_{n,k}=C_{n,k}(\alpha,\beta)$, that appear in Theorem \ref{thm323}. 
  
  For example, for  given $n=6$, $k=1$ and $\alpha=-\beta=-1/2$ (Chebyshev weight of the third kind), the matrix $C_{6,1}(-1/2,1/2)$, its minimal eigenvalue and the best constant $M_{6,1}=M_{6}(-1/2,1/2)$ can be also obtained by the following commands: 
\begin{verbatim}
  n=6; k=1; al=-1/2; be=1/2;
  Aa=Ank[n,k,al,be]; AaI=Inverse[Aa]; Cnk=Transpose[AaI].AaI;
  {lamdamim}=Eigenvalues[N[Cnk,16],-1][[1]]; 
  Mnk=1/Sqrt[lambdamin]; 	
 \end{verbatim}  
Thus,
\[C_{6,1}\biggl(-\frac{1}{2},\frac{1}{2}\biggr)=\left[
\begin{array}{cccccc}
 \frac{3}{8} & -\frac{1}{24} & -\frac{1}{24} & 0 & 0 & 0 \\[2.5mm]
 -\frac{1}{24} & \frac{7}{72} & -\frac{1}{144} & -\frac{1}{48} & 0 & 0 \\[2.5mm]
 -\frac{1}{24} & -\frac{1}{144} & \frac{13}{288} & -\frac{1}{480} & -\frac{1}{80} & 0 \\[2.5mm]
 0 & -\frac{1}{48} & -\frac{1}{480} & \frac{21}{800} & -\frac{1}{1200} & -\frac{1}{120} \\[2.5mm]
 0 & 0 & -\frac{1}{80} & -\frac{1}{1200} & \frac{37}{3600} & -\frac{1}{720} \\[2.5mm]
 0 & 0 & 0 & -\frac{1}{120} & -\frac{1}{720} & \frac{1}{144} 
\end{array}
\right],\]
and 
\[\lambda_{\min}(C_{6,1}(-1/2,1/2))=1.793818125493\times 10^{-3}, \ \ M_{6,1}=23.61080508655.\]
In the general case,  $C_{n,1}(\alpha,\beta)$ is a symmetric five-diagonal matrix.

\begin{remark}\label{rem32_1}
For the five-diagonal symmetric matrix $C_{n,1}(-1/2,1/2)=\left[c_{i,j}\right]_{i,j=1}^{n,n}$ of order $n$ (Chebyshev case of the third kind), one can prove that 
\[c_{i,i}=\left\{\begin{array}{ll}
\dfrac{i^2+i+1}{2i^2(i+1)^2},&i=1,\ldots,n-2,\\[4mm]
\dfrac{i^2+2i+2}{4i^2(i+1)^2},&i=n-1,\\[4mm]
\dfrac{1}{4i^2},&i=n;	
\end{array}\right.\]
\[\quad c_{i,i+1}=\left\{\begin{array}{ll}
\dfrac{-1}{2i(i+1)^2(i+2)},&i=1,\ldots,n-2,\\[4mm]
\dfrac{-1}{4i(i+1)^2},&i=n-1;
\end{array}\right.
	\]
and
\[c_{i,i+2}=-\frac{1}{4(i+1)(i+2)},\ \ n=1,\ldots,n-2.\]	
In the Chebyshev case of the fourth kind $(\alpha=-\beta=1/2)$, the all elements are the same as the previous ones, except $c_{i,i+1}$ which have only  opposite sign of the previous ones.
\end{remark}

In the sequel we consider Markov's extremal problems only with the Gegenbauer weight $(\alpha=\beta>-1)$ for $k=1$. The elements of the first sub-diagonals in the corresponding symmetric matrix 
\[C_{n}\equiv C_{n,1}(\alpha,\alpha)=\bigl[c_{i,j}\bigr]_{i,j=1}^{n,n}\quad (c_{j,i}=c_{i,j})\]
  are equal to zero and the eigenvalue problem for such a matrix  can be reduced to two eigenvalues problems for tridiagonal matrices (cf.\  \cite{RJ50}).

Thus, we now consider the matrix
\begin{equation}\label{matCnDek}
\arraycolsep=2pt\def\arraystretch{1.2}
C_{n}=\left[
\begin{array}{cccccccc}
c_{1,1}&0&c_{1,3}&&&&&\mathbf{O}\\ \vspace{4pt}
0&c_{2,2}&0&c_{2,4}\\ \vspace{4pt}
c_{3,1}&0&c_{3,3}&0&c_{3,5}\\ \vspace{4pt}
&c_{4,2}&0&c_{4,4}&0&c_{4,6}\\ \vspace{4pt}
&&\ddots&\ddots&\ddots&\ddots&\ddots\\ \vspace{4pt}
&&&c_{n-2,n-4}&0&c_{n-2,n-2}&0&c_{n-2,n}\\ \vspace{4pt}
&&&&c_{n-1,n-3}&0&c_{n-1,n-1}&0\\ \vspace{4pt}
\mathbf{O}&&&&&c_{n,n-2}&0&c_{n,n}
\end{array}\right], 
\end{equation}
and prove the  following auxiliary result which is related to a decomposition of determinants of this type of  matrices\footnote{For some more general cases, with the even weight function $t\mapsto w(t)$ on a symmetric interval $(-a,a)$, see  \cite{RJ50} and \cite[pp.~579--582]{mil1994}.}. 

\begin{lemma}\label{LemDecomp}
Let $E_m$ and $\widehat E_m$ be symmetric tridiagonal matrices given by
\begin{equation}\label{Em}
E_m=\left[\begin{array}{ccccc}
c_{1,1}&c_{1,3}&&&{\mathbf{O}}\\[1mm]
c_{3,1}&c_{3,3}&c_{3,5}\\[1mm]
 &c_{5,3}&c_{5,5}&\ddots  \\[1mm]
  &&\ddots&\ddots&c_{2m-3,2m-1}\\[1mm]
{\mathbf{O}}&&&c_{2m-1,2m-3}&c_{2m-1,2m-1}
\end{array}\right]
\end{equation}
and
\begin{equation}\label{hatEm}
\widehat E_m=\left[\begin{array}{ccccc}
c_{2,2}&c_{2,4}&&&{\mathbf{O}}\\[1mm]
c_{4,2}&c_{4,4}&c_{4,6}\\[1mm]
 &c_{6,4}&c_{6,6}&\ddots  \\[1mm]
  &&\ddots&\ddots&c_{2m-2,2m}\\[1mm]
{\mathbf{O}}&&&c_{2m,2m-2}&c_{2m,2m}
\end{array}\right].
\end{equation}
Then
\begin{equation}\label{proizDETS}
\det C_n=\det E_{[(n+1)/2]}\det\widehat E_{[n/2]}.	
\end{equation}
\end{lemma}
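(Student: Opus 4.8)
The plan is to exploit the ``checkerboard'' sparsity of the matrix $C_n$ in (\ref{matCnDek}): it is pentadiagonal with vanishing first sub- and super-diagonals, so an entry $c_{i,j}$ can be nonzero only when $i\equiv j\pmod 2$. Hence, if we relabel the index set $\{1,2,\dots,n\}$ by listing first the odd indices $1,3,5,\dots$ and then the even indices $2,4,6,\dots$, the correspondingly conjugated matrix becomes block diagonal, and the two diagonal blocks are exactly the odd--odd submatrix $E_{[(n+1)/2]}$ of (\ref{Em}) and the even--even submatrix $\widehat E_{[n/2]}$ of (\ref{hatEm}).

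Concretely, I would let $P$ be the permutation matrix associated with the reordering $1,3,5,\dots,2,4,6,\dots$ of $\{1,\dots,n\}$, noting that this set contains exactly $[(n+1)/2]$ odd and $[n/2]$ even numbers, so that the block sizes match the claim. Then I would check entrywise that
\[
P^{T}C_nP=\left[\begin{array}{cc} E_{[(n+1)/2]} & \mathbf O\\[1mm] \mathbf O & \widehat E_{[n/2]}\end{array}\right].
\]
Indeed, in the odd--odd block the $(i,j)$ entry is $c_{2i-1,2j-1}$, which by pentadiagonality is nonzero only for $|i-j|\le 1$, with diagonal $c_{2i-1,2i-1}$, superdiagonal $c_{2i-1,2i+1}$ and subdiagonal $c_{2i-1,2i-3}$; this is precisely the tridiagonal matrix (\ref{Em}). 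The even--even block is identical after replacing $2i-1$ by $2i$, and it reproduces (\ref{hatEm}). Finally, every mixed entry $c_{2i-1,2j}$ vanishes, since $2i-1$ and $2j$ have opposite parity, so the off-diagonal blocks are zero.

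To conclude, $\det P=\pm1$ gives $\det(P^{T}C_nP)=(\det P)^2\det C_n=\det C_n$, and the determinant of a block diagonal matrix factors as the product of the determinants of its blocks; combining these yields (\ref{proizDETS}). The argument is essentially bookkeeping: the only point that requires care is matching the indexing of the sub- and super-diagonal entries of the two blocks with the definitions (\ref{Em}) and (\ref{hatEm}) and confirming the off-diagonal blocks vanish, but the assumed sparsity pattern of $C_n$ makes both immediate, so no real obstacle arises.
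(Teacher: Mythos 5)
Your proof is correct, but it takes a different route from the paper. The paper proves \eqref{proizDETS} by a generalized Laplace expansion of $\det C_n$ along the odd-numbered columns, observing that only the single minor--cominor pair formed by the odd rows against the odd columns and the even rows against the even columns contributes, which gives the product formula directly. You instead conjugate $C_n$ by the permutation $P$ that lists the odd indices before the even ones, verify entrywise that $P^{T}C_nP$ is block diagonal with blocks $E_{[(n+1)/2]}$ and $\widehat E_{[n/2]}$, and then use $\det P=\pm1$ together with the multiplicativity of the determinant over block diagonal matrices. Both arguments rest on the same parity observation (that $c_{i,j}=0$ unless $i\equiv j\pmod 2$), and your index bookkeeping --- the block sizes $[(n+1)/2]$ and $[n/2]$, the identification of the sub- and super-diagonals with the entries of \eqref{Em} and \eqref{hatEm}, and the vanishing of the mixed blocks --- is all accurate. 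Your version arguably buys a little more: since $P^{T}(C_n-\lambda I_n)P$ is block diagonal with blocks $E-\lambda I$ and $\widehat E-\lambda I$, the similarity immediately shows that the spectrum of $C_n$ is the union of the spectra of $E_{[(n+1)/2]}$ and $\widehat E_{[n/2]}$ (eigenvectors included), which is exactly the consequence the paper extracts afterwards by substituting $c_{i,i}-\lambda$ for $c_{i,i}$ in \eqref{proizDETS}. The Laplace-expansion proof is shorter on the page but delivers only the determinant identity itself.
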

 
\begin{proof} For the determinant  of the matrix $C_n$, given by \eqref{matCnDek}, we use the Laplace expansion. 

Let first $n$ be even $(n=2m)$. Expanding by columns numbered $1$, $3$, $\ldots$, $n-1$, one finds that only one non-zero contribution results, namely from the minor and cominor pair
\[\left(\begin{array}{cccc}
1 & 3 & \cdots & n-1\\
1 & 3 & \cdots & n-1	
\end{array}\right),\quad 
\left(\begin{array}{cccc}
2 & 4 & \cdots & n\\
2 & 4 & \cdots & n	
\end{array}\right).\]
In this way, one  immediately obtains the relation \eqref{proizDETS}.

Similarly, Laplace expansion by columns $1, 3, \ldots, n$ gives the result for odd $n$.	
\end{proof}

Replacing $c_{i,i}$  by $c_{i,i}-\lambda$ from \eqref{proizDETS} we get
\[
\det\bigl[C_n-\lambda I_n\bigr]=\det\bigl[ E_{[(n+1)/2]}-\lambda I_{[(n+1)/2]}\bigr]\det\bigl[\widehat E_{[n/2]}-\lambda I_{[n/2]}\bigr],\]
where $I_m$ is an identity matrix of order $m$. In this way, the eigenvalue problem for $C_n$ reduces to two eigenvalue problems for matrices of lower orders and the second part of Theorem \ref{thm323} gives the following result:

\begin{theorem}\label{Modthm323Gegenbauer}
Let $X=L^2[(-1,1);(1-t^2)^\alpha]$, $\alpha>-1$, and the tridiagonal matrices $E_m$ and $\widehat E_m$ be  given as in Lemma  $\ref{LemDecomp}$.
Then the best constant $M_n$  in Markov's extremal problem is 
\begin{equation}\label{MnkBPbGBw}
M_{n}(\alpha)=\sup_{p\in\PP_n\setminus\{0\}}\frac{{\|p'}\|_X}{{\|p\|}_X}=\frac{1}{\sqrt{\min\bigl\{\lambda_{\min}(E_{[(n+1)/2]}),
\lambda_{\min}(\widehat E_{[n/2]})\bigr\}}},
\end{equation}
where $\lambda_{\min}(E_{[(n+1)/2]})$ and $\lambda_{\min}(\widehat E_{[n/2]})$ are the minimal eigenvalues of matrices $E_{[(n+1)/2]}$ and $\widehat E_{[n/2]}$, respectively.
\end{theorem}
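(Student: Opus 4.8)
The plan is to assemble the result directly from the machinery already in place, with the Lemma doing the only real work. First I would recall that for the Gegenbauer weight $\alpha=\beta$ the matrix $C_n\equiv C_{n,1}(\alpha,\alpha)$ is symmetric five-diagonal with vanishing first sub- and super-diagonals, so it has exactly the sparsity pattern of \eqref{matCnDek}; this is the structural input that lets Lemma~\ref{LemDecomp} apply. (One should note that this zero-first-subdiagonal property follows because the differentiation operator $D$ sends $p_j^{(\alpha,\alpha)}$ to a combination of $p_\nu^{(\alpha,\alpha)}$ with $\nu$ of the same parity as $j-1$, so the matrix $A_{n,1}(\alpha,\alpha)$ couples indices of fixed parity only; hence so does $A_{n,1}^{-1}$ and therefore $C_{n,1}=(A_{n,1}^T)^{-1}A_{n,1}^{-1}$.)

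Next I would invoke Lemma~\ref{LemDecomp} together with the observation already displayed just before the theorem: replacing each diagonal entry $c_{i,i}$ by $c_{i,i}-\lambda$ and applying \eqref{proizDETS} to the matrix $C_n-\lambda I_n$ gives the factorization of the characteristic polynomial
\[
\det\bigl[C_n-\lambda I_n\bigr]=\det\bigl[E_{[(n+1)/2]}-\lambda I_{[(n+1)/2]}\bigr]\,\det\bigl[\widehat E_{[n/2]}-\lambda I_{[n/2]}\bigr].
\]
Consequently the spectrum of $C_n$ is the union of the spectra of $E_{[(n+1)/2]}$ and $\widehat E_{[n/2]}$, and in particular
\[
\lambda_{\min}(C_n)=\min\bigl\{\lambda_{\min}(E_{[(n+1)/2]}),\,\lambda_{\min}(\widehat E_{[n/2]})\bigr\}.
\]
Both blocks inherit positive definiteness from $C_n=(A_{n,1}^T)^{-1}A_{n,1}^{-1}$ (a Gram-type matrix of an invertible map), so all three minima are strictly positive and the square roots below make sense.

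Finally I would feed this into the second formula of Theorem~\ref{thm323}, namely $M_{n,1}=1/\sqrt{\lambda_{\min}(C_{n,1})}$ specialized to $k=1$ and $w(t)=(1-t^2)^\alpha$, to conclude
\[
M_n(\alpha)=\frac{1}{\sqrt{\lambda_{\min}(C_n)}}=\frac{1}{\sqrt{\min\bigl\{\lambda_{\min}(E_{[(n+1)/2]}),\,\lambda_{\min}(\widehat E_{[n/2]})\bigr\}}},
\]
which is exactly \eqref{MnkBPbGBw}. The only genuinely delicate point is the very first step — verifying that $C_{n,1}(\alpha,\alpha)$ really does have the parity/sparsity structure of \eqref{matCnDek} so that Lemma~\ref{LemDecomp} is applicable; this rests on the even/odd decoupling of $D$ on Gegenbauer polynomials and is the place where the hypothesis $\alpha=\beta$ is essential (for general $\alpha\ne\beta$ the first sub-diagonal of $C_{n,1}$ need not vanish, as the Chebyshev third-kind computation in Remark~\ref{rem32_1} shows). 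Everything after that is bookkeeping.
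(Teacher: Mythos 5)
Your proposal is correct and follows essentially the same route as the paper: factor the characteristic polynomial of $C_n$ via Lemma~\ref{LemDecomp}, identify $\lambda_{\min}(C_n)$ as the smaller of the two block minima, and invoke the second part of Theorem~\ref{thm323}. The only difference is that you explicitly justify the vanishing of the first sub-/super-diagonals of $C_{n,1}(\alpha,\alpha)$ by the even/odd decoupling of differentiation on Gegenbauer polynomials, whereas the paper simply asserts this structure (and exhibits it in the Legendre and Chebyshev cases); your parity argument is a valid and welcome addition.
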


In particular, we consider now  three different important cases (see \cite{RJ50}):\smallskip

{\bf 1.  Legendre case $(\alpha=\beta=0)$.} In this case, using the elements of the matrix $A_{n,1}^{-1}=\bigl[\widetilde a_{i,j}\bigr]_{i,j=1}^{n,n}$, with only two non-zero diagonals
\[\widetilde a_{i,i}=\frac{1}{\sqrt{4i^2-1}}\quad (i=1,\ldots,n)\]
and
\[\widetilde a_{i,i+2}=\frac{1}{\sqrt{(2i+1)(2i+3}}\quad(i=1,\ldots,n-2),\]
we get the elements of the matrix $C_n$ in \eqref{matCnDek} as
$C_n=(A_{n,1}^{-1})^TA_{n,1}^{-1}$, i.e.,
\[c_{i,i}=\left\{
\begin{array}{ll}
\dfrac{2}{(2i-1)(2i+3)},&i=1,\ldots,n-2,\\[4mm]
\dfrac{1}{4i^2-1},&i=n-1,\,n,
\end{array}\right.	\]
and
\begin{align*}
c_{i,i+1}&=0,\quad i=1,\ldots,n-1,\\[1mm]
c_{i,i+2}&=-\dfrac{1}{\sqrt{(2i+1)(2i+5)}},\quad  i=1,\ldots,n-2.	
\end{align*}

Since the elements $c_{i,i+2}$ are negative, in order to reduce the eigenvalue problems with the matrices 	\eqref{Em} and \eqref{hatEm} 
in the way we did before, we introduce 
\begin{align*}
\alpha_\nu&=\left\{\begin{array}{ll}
-\dfrac{2}{(4\nu+1)(4\nu+5)},&\nu=0,1,\ldots,\left[\dfrac{n+1}{2}\right]-2,\\[4mm]
-\dfrac{1}{(4\nu+1)(4\nu+3)},&\nu=\left[\dfrac{n+1}{2}\right]-1;
\end{array}\right.\\[3mm]
\beta_0 &=1,\ \ \ \beta_\nu = \dfrac{1}{(4\nu- 1) (4\nu + 1)^2 (4 \nu + 3)},\ \ \ 
\nu=1,\ldots,\left[\dfrac{n+1}{2}\right]-1;
\end{align*}
and
\begin{align*}
\widehat\alpha_\nu&=\left\{\begin{array}{ll}
-\dfrac{2}{(4\nu+3)(4\nu+7)},&\nu=0,1,\ldots,\left[\dfrac{n}{2}\right]-2,\\[4mm]
-\dfrac{1}{(4\nu+3)(4\nu+5)},&\nu=\left[\dfrac{n}{2}\right]-1;
\end{array}\right.\\[3mm]
\widehat\beta_0 &=1,\ \ \ \widehat\beta_\nu = \dfrac{1}{(4\nu+1) (4\nu + 3)^2 (4 \nu + 5)},\ \ \ 
\nu=1,\ldots,\left[\dfrac{n}{2}\right]-1.
\end{align*}

In fact, in this way  we consider the matrices $-C_n$, $-E_m$ and $-\widehat E_m$, which have their eigenvalues only with opposite signs from those for the original matrices $C_n$, $E_m$ and $\widehat E_m$, respectively. But now, the matrices $-E_m$ and $-\widehat E_m$ can be interpreted as Jacobi matrices for certain measures and we can, as before, to apply the efficient {\sc Mathematica}  package {\tt OrthogonalPolynomials} (see \cite{CveMil2004,MilCve2012})  for calculating the best constants $M_n=M_{n}(\alpha)$, given by \eqref{MnkBPbGBw}. 
\smallskip

{\bf 2. Chebyshev case of the first kind $(\alpha=\beta=-1/2)$.} In a similar way we obtain
\[c_{i,i}=\left\{\begin{array}{ll}
\dfrac{3}{4},&i=1, \\[4mm]
\dfrac{1}{2i^2},&i=2,\ldots,n-2,\\[4mm]
\dfrac{1}{4i^2},&i=n-1,\ n;	
\end{array}\right.	\]
and
\[c_{i,i+1}=0,\ \ i=1,\ldots,n-1;\quad c_{i,i+2}=-\dfrac{1}{4i(i+2)},\ \ i=1,\ldots,n-2,\]
so that the corresponding recurrence coefficients are
\begin{align*}
\alpha_{\nu}&=\left\{\begin{array}{ll}
-\dfrac{3}{4},&\nu=0, \\[4mm]
-\dfrac{1}{2(2\nu+1)^2},&\nu=1,\ldots,\left[\dfrac{n+1}{2}\right]-2,
\\[4mm]
-\dfrac{1}{4(2\nu+1)^2},&\nu=\left[\dfrac{n+1}{2}\right]-1;	\end{array}\right.	\\[3mm]
\beta_0&=1,\ \ \ \beta_\nu=\dfrac{1}{16(4\nu^2-1)^2},\quad \nu=1,\ldots,\left[\dfrac{n+1}{2}\right]-1;
\end{align*}
and
\begin{align*}
\widehat\alpha_{\nu}&=\left\{\begin{array}{ll}
-\dfrac{1}{8(\nu+1)^2},&\nu=0,1,\ldots,\left[\dfrac{n}{2}\right]-2,
\\[4mm]
-\dfrac{1}{16(\nu+1)^2},&\nu=\left[\dfrac{n}{2}\right]-1;	
\end{array}\right.	\\[3mm]
\widehat\beta_0&=1,\ \ \ \widehat\beta_\nu=\dfrac{1}{256\nu^2(\nu+1)^2},\quad \nu=1,\ldots,\left[\dfrac{n}{2}\right]-1.
\end{align*}
\medskip

{\bf 3. Chebyshev case of the second kind $(\alpha=\beta=1/2)$.} Here,
we have
\[c_{i,i}=\left\{\begin{array}{ll}
\dfrac{i^2+2i+2}{2i^2(i+2)^2},&i=1,\ldots,n-2,\\[4mm]
\dfrac{1}{4i^2},&i=n-1,\ n;	
\end{array}\right.	\]
and 
\[c_{i,i+1}=0,\ \ i=1,\ldots,n-1;\quad c_{i,i+2}=-\dfrac{1}{4(i+2)^2},\ \ i=1,\ldots,n-2,\]
so that  
\begin{align*}
\alpha_{\nu}&=\left\{\begin{array}{ll}
-\dfrac{4\nu^2+8 \nu+5}{2(2\nu+1)^2(2\nu+3)^2},&\nu=0,1,\ldots,	\Bigl[\dfrac{n+1}{2}\Bigr]-2,\\[3mm]
-\dfrac{1}{4(2\nu+1)^2},&\nu=\Bigl[\dfrac{n+1}{2}\Bigr]-1,
\end{array}\right.\\[3mm]
\beta_0&=1,\ \ \ \beta_\nu=\frac{1}{16(2\nu+1)^4},\ \ \nu=1,\ldots,\Bigl[\dfrac{n+1}{2}\Bigr]-1;
\end{align*}
and
\begin{align*}
\widehat\alpha_{\nu}&=\left\{\begin{array}{ll}
-\dfrac{2\nu^2+6\nu+5}{16(\nu+1)^2(\nu+2)^2},&\nu=0,1,\ldots,\left[\dfrac{n}{2}\right]-2,\quad\\[3mm]
-\dfrac{1}{16(\nu+1)^2},&\nu=\left[\dfrac{n}{2}\right]-1,
\end{array}\right.\\[3mm]	
\widehat\beta_0&=1,\ \ \ \widehat\beta_\nu=\frac{1}{256(\nu+1)^4},\quad\nu=1,2,\ldots,\left[\frac{n}{2}\right]-1.
\end{align*}

Figure \ref{MnCh012} shows graphics of the best constants
$n\mapsto M_{n}(\alpha)$ in \eqref{MnkBPbGBw} for $n=1(1)100$ in $\log$-scale for $\alpha=-1/2,0,1/2$.
\begin{figure}[h]
 \center
\includegraphics[width=0.82\textwidth]{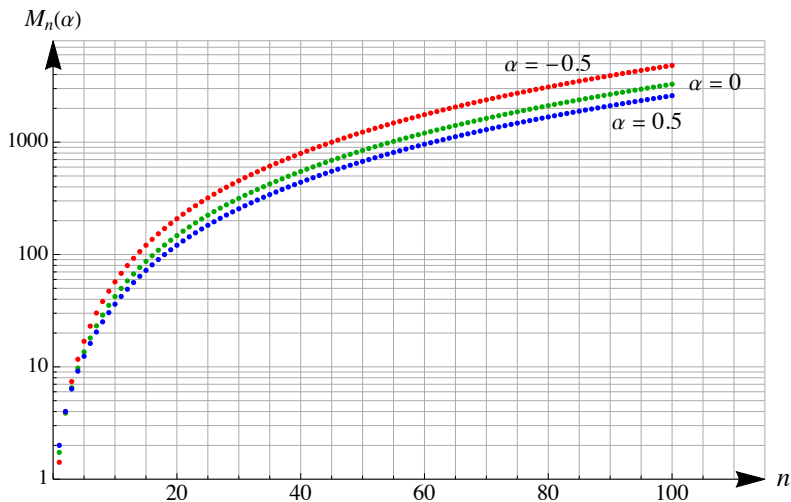} 
\caption{Best constants $n\mapsto M_{n}(\alpha)$ 
in \eqref{MnkBPbGBw} for $n=1(1)100$ in $\log$-scale, when $\alpha=-1/2,0,1/2$}\label{MnCh012}
\end{figure}

\begin{remark}\label{remEO}
In the previous cases $(\alpha=0$, $-1/2$, $1/2)$  the expression \eqref{MnkBPbGBw}  can be written as
\[
M_{n}(\alpha)=\left\{
\begin{array}{ll}
\dfrac{1}{\sqrt{\lambda_{\min}(E_{[(n+1)/2]})}},& \mbox{if $n$ is odd},\\[5mm]
\dfrac{1}{\sqrt{\lambda_{\min}(\widehat E_{[n/2]})}},& \mbox{if $n$ is even}.
\end{array}\right.
\] 
In addition see \cite[Theorem 2.3]{AleksovGeno2018}.	
\end{remark}

\begin{remark}
Recently Nikolov and Shadrin \cite{GNAS2019} (see also \cite{AleksovGenoSh2016,AleksovGeno2018,Draux2006}) 
 have been considered the best constant $M_n(\alpha)$ in \eqref{MnkBPbGBw}. Taking $\alpha=\lambda-1/2$, $\lambda>-1/2$, $\lambda'=\min\{0,\lambda\}$ and $\lambda'=\max\{0,\lambda\}$,  they derived explicit lower and upper bounds for $M_n(\alpha)=c_n(\lambda)$ for each $n\ge3$,
 \[\frac{1}4\frac{n^2(n+\lambda)^2}{(\lambda+1)(\lambda+2)}<[c_n(\lambda)]^2<\frac{n(n+2\lambda+2)^3}{(\lambda+2)(\lambda+3)},\quad\lambda\ge 2,\]
and
\[
\frac{(n+\lambda)^2(n+2\lambda')^2}{(2\lambda+1)(2\lambda+5)}<[c_n(\lambda)]^2<\frac{(n+\lambda+\lambda''+2)^4}{2(2\lambda+1)\sqrt{2\lambda+5}},\quad \lambda>-\frac{1}{2}.
\]	
\end{remark}

In \cite{aptekar2000} Aptekarev, Draux and Kalyagin discussed the asymptotics of $M_n(\alpha)$ in \eqref{MnkBPbGBw} when $n\to +\infty$, and proved that
the best constant has the asymptotics
\begin{equation}\label{eq1.2Apt}
M_n(\alpha)=\frac{n^2}{2j_{\nu(\alpha)}}\bigl[1+o(1)\bigr],\quad
\nu(\alpha)=\frac{\alpha-1}{2},	
\end{equation}
where $j_\nu$ is the smallest zero of the Bessel function $J_\nu(t)$.

Recently, Aptekarev, Draux, Kalyagin, and Tulyakov \cite{aptekar2015}
have considered the asymptotics of the best constant $M_n(\alpha,\beta)$
in the general extremal problem in the space $X=L^2[(-1,1);w]$, with the Jacobi weight function $w(t)=(1-t)^\alpha(1+t)^\beta$ \ $(\alpha,\beta>-1)$ on $(-1,1)$.

\begin{theorem}[\cite{aptekar2015}]\label{JacobiProbl}
Let the parameters of the Jacobi weight function $w(t)=(1-t)^\alpha(1+t)^\beta$ 	satisfy the restriction $|\alpha-\beta|<4$. Then, for the best constant
$M_n(\alpha,\beta)$  we have the asymptotics
\begin{equation}\label{eq1.5Apt}
M_n(\alpha,\beta)= \frac{n^2}{2j_{\nu^*}}\bigl[1+o(1)\bigr],\quad
\nu^*=\min\left\{\frac{\alpha-1}{2},\frac{\beta-1}{2}\right\},	
\end{equation}
where $j_{\nu^*}$ is the smallest zero of the Bessel function $J_{\nu^*}$.
\end{theorem}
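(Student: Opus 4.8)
The plan is to begin from the spectral reformulation already in hand: by Theorem \ref{thm323} with $k=1$,
\[
M_n(\alpha,\beta)^2=\lambda_{\max}\!\left(B_{n,1}(\alpha,\beta)\right)=\frac{1}{\lambda_{\min}\!\left(C_{n,1}(\alpha,\beta)\right)},
\]
so that \eqref{eq1.5Apt} is equivalent to $n^{-4}\lambda_{\max}(B_{n,1})\to(2j_{\nu^*})^{-2}$. The substitution $t\mapsto-t$ exchanges $\alpha$ and $\beta$ and leaves $\nu^*$ unchanged, so I would assume henceforth that $\alpha\le\beta$, i.e.\ $\nu^*=(\alpha-1)/2$; the endpoint $t=1$, at which the Jacobi weight vanishes to the smaller order, is then the one that governs the extremum.

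The decisive step is a localization estimate: one must show that for an extremal (or almost-extremal) polynomial $p_n$ the whole of $\|p_n'\|_X^2$, to leading order, is produced on a shrinking neighbourhood $1-t=O(n^{-2})$ of $t=1$, while the contributions of a fixed bulk interval $[-1+\varepsilon,1-\varepsilon]$ and of the mirror neighbourhood of $t=-1$ are of lower order relative to $n^4\|p_n\|_X^2$. The point about $t=-1$ is that a polynomial concentrated there can only produce a Rayleigh quotient at most $\frac{n^4}{4j_{(\beta-1)/2}^2}(1+o(1))$, and since $\nu\mapsto j_\nu$ is strictly increasing this is $<\frac{n^4}{4j_{\nu^*}^2}$ whenever $\beta>\alpha$; the genuinely delicate ingredient is the interaction (cross) terms between the two endpoint zones, and the hypothesis $|\alpha-\beta|<4$ is precisely the range in which these are $o(n^4)$ — equivalently, the range in which $C_{n,1}(\alpha,\beta)$ is a small enough perturbation of the endpoint-decoupled, even--odd split structure that was exploited for the Gegenbauer matrices in Lemma \ref{LemDecomp} and Theorem \ref{Modthm323Gegenbauer}. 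I expect this localization, with uniform control of the error terms, to be the main obstacle of the whole argument.

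Once the problem has been localized near $t=1$, I would rescale $t=1-\tau/n^2$ (equivalently $t=\cos(z/n)$ with $z=\sqrt{2\tau}$). Writing $f(\tau)=p(1-\tau/n^2)$ one has $p'(t)=-n^2 f'(\tau)$ and $w(t)=(1-t)^\alpha(1+t)^\beta\sim 2^\beta n^{-2\alpha}\tau^{\alpha}$, so for polynomials supported in this zone
\[
\frac{\|p'\|_X^2}{\|p\|_X^2}=n^4\,\frac{\DS\int_0^{\infty}|f'(\tau)|^2\,\tau^{\alpha}\,\D\tau}{\DS\int_0^{\infty}|f(\tau)|^2\,\tau^{\alpha}\,\D\tau}\,\left(1+o(1)\right).
\]
By the Mehler--Heine asymptotics for Jacobi polynomials, the functions $f$ that occur as limits of such rescaled polynomials of degree $n$ are exactly the order-$\alpha$ Bessel (Hankel) transforms band-limited by the degree, and the extremal problem passes to the continuous one over that class. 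Its optimal value can be read off from the Euler--Lagrange equation $f''+\frac{\alpha}{\tau}f'+\lambda f=0$, whose solution regular at the origin is $f(\tau)=\tau^{(1-\alpha)/2}J_{\nu^*}(\sqrt{\lambda}\,\tau)$ with $\nu^*=(\alpha-1)/2$; imposing the edge condition dictated by the band limit singles out $\sqrt{\lambda_{\max}}=(2j_{\nu^*})^{-1}$, so that the continuous optimum equals $(2j_{\nu^*})^{-2}$. The correct identification of that edge condition — hence of the first positive zero $j_{\nu^*}$ of $J_{\nu^*}$, rather than a zero of $J_{\nu^*}'$ or something else — is the second point requiring care.

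Since this endpoint analysis does not see $\beta$ at all, its leading term coincides with the Gegenbauer value \eqref{eq1.2Apt} evaluated at the parameter $\alpha$; combined with the localization of the second paragraph this yields $M_n(\alpha,\beta)^2=\frac{n^4}{4j_{\nu^*}^2}(1+o(1))$, which is \eqref{eq1.5Apt}. As a consistency check, for $\alpha=\beta$ the localization may take place at either endpoint and one recovers exactly \eqref{eq1.2Apt}, while for $\alpha=\beta\in\{0,\pm1/2\}$ the suitably rescaled tridiagonal matrices $E_m$ and $\widehat E_m$ of Theorem \ref{Modthm323Gegenbauer} converge to the same Bessel boundary-value problem and reproduce the constant $4j_{\nu^*}^2$.
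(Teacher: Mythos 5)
First, a point of orientation: the paper does not prove Theorem \ref{JacobiProbl} at all. It is quoted from \cite{aptekar2015}, and the only hint the paper gives about the original argument is that the authors had to show that particular solutions of the finite-difference problem attached to the banded matrices $C_{n,1}(\alpha,\beta)$ are close to the corresponding solutions of a limiting differential equation --- i.e.\ a discrete-to-continuous spectral analysis of the matrix itself. That is a different route from your plan (localization of the extremal polynomial near the ``worse'' endpoint plus a Mehler--Heine rescaling), so there is no in-paper proof against which your sketch can be matched; it has to stand on its own.

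As a proof it does not yet stand. The two steps you yourself flag as delicate --- the endpoint localization with uniform error control, and the identification of the boundary condition that selects the first zero $j_{\nu^*}$ of $J_{\nu^*}$ rather than, say, a zero of $J_{\nu^*}'$ --- are precisely the content of the theorem, and neither is carried out; everything else in your text is bookkeeping around them. Two specific claims are also unsound. (i) The appeal to the ``endpoint-decoupled, even--odd split structure'' of Lemma \ref{LemDecomp} is not available here: that splitting rests on the vanishing of the entries $c_{i,i+1}$, which holds only in the symmetric Gegenbauer case $\alpha=\beta$; for $\alpha\ne\beta$ the matrix $C_{n,1}(\alpha,\beta)$ is a genuine five-diagonal matrix (see the displayed $C_{6,1}(-1/2,1/2)$) and no such decomposition exists to be ``perturbed''. (ii) The assertion that $|\alpha-\beta|<4$ is ``precisely the range'' in which the cross terms between the two endpoint zones are $o(n^4)$ is unsupported and almost certainly false: as the paper records, the authors of \cite{aptekar2015} could not decide whether the restriction is necessary (it is an artifact of their proof strategy), and Totik \cite{VT2019PAMS} later proved \eqref{eq1.5Apt} with no restriction on $\alpha-\beta$ at all. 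A proof whose key localization lemma is calibrated to the threshold $|\alpha-\beta|=4$ would therefore be organized around a premise that the known results contradict. The overall shape of your plan (reduce to a Bessel eigenvalue problem at the endpoint where the weight vanishes to the smaller order) is consistent with the answer, but as written it is an outline with its load-bearing steps missing and one of its guiding heuristics wrong.
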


Evidently for $\beta=\alpha$,  \eqref{eq1.5Apt} reduces to \eqref{eq1.2Apt}. In order to get the previous generalization, the authors needed to prove that linearly independent particular solutions of differential equations which satisfy the boundary conditions at the initial values of the discrete variable of a finite difference problem indeed are close to the particular solutions of the finite difference problem.

\begin{remark}
The authors of \cite{aptekar2015} pointed out that the most surprising result for them is the appearance of the restriction $|\alpha-\beta|<4$ on the parameters $\alpha,\beta$. Their comment was that they cannot prove or disapprove its necessity; however, they have to admit that this restriction is unavoidable in their proof strategy of Theorem \ref{JacobiProbl}.
\end{remark}

Recently Totik \cite{VT2019PAMS} has proved Theorem \ref{JacobiProbl}, without the previous restriction $|\alpha-\beta|<4$ on the parameters $\alpha,\beta$. Actually he proved a more general result, giving the exact asymptotic Markov constant for generalized Jacobi weight  on several intervals.

\begin{remark}
Using Sobolev spaces with continuous and discrete coherent pairs of weights, Aptekarev, Draux, and Tulyakov \cite{aptekar2018} studied the asymptotic behavior of the best constants in Markov-Bernstein inequalities with classical weighted integral norms.
\end{remark} 

\section{Markov's Extremal Problems on Some Restricted Classes of Polynomials}\label{SEC8}

In this section we only mention Markov's extremal problems 
on $X=L^2[(a,b);w]$ for some restricted classes of polynomials $W_n\subset\PP_n$, i.e.,
\begin{equation}\label{MEProb2}
M_{n,k}=\sup_{p\in W_n\setminus\{0\}}\frac{{\|p^{(k)}\|}_X}{{\|p\|}_X}
\quad(1\le k\le n).	
\end{equation}
Usually we can restrict zeros of polynomials or their coefficients. In this way, the corresponding best constant $M_{n,k}$  can be improved. For such kind of extremal problems in uniform norm see
Milovanovi\'c at al. \cite[pp.~624--643]{mil1994}.

In 1981 Varma \cite{Varma1981} investigated the problem of determining the best constant $C_n(s)=M_{n,1}(s)^2$  in
the inequality
\[{\|P'\|}_X^2\le C_n(s)\,{\|P\|}_X^2
\]
on the space $X=L^2[(0,+\infty);w]$,
with the generalized Laguerre weight function $w(t)=t^s{\E}^{-t}$ $(s>-1)$,
for polynomials $P\in W_n$, where
\begin{equation}\label{WnPOL}
W_n=\left\{P \biggm| P(t) = \sum_{\nu=0}^na_\nu t^\nu,\  
 a_\nu\ge 0,\ \nu=0,1,\ldots,n \right\}.
\end{equation}
 For such polynomials and $s\ge\left(\sqrt{5}-1\right)/2$, he proved that
$$\int_0^{+\infty} (P'(t))^2w(t)\,{\D}t
\le \frac{n^2}{(2n+s)(2n+s-1)}
\int_0^{+\infty}(P(t))^2w(t)\,{\D} t,$$
with equality for $P(t) = t^n$, and for $0\le s \le {1}/{2}$
\[
\int_0^{+\infty }(P'(t))^2w(t){\D} t
\le \frac{1}{(2+s)(1+s)}
\int_0^{+\infty} (P(t))^2w(t){\D} t.
\]

The ranges $-1<s < 0$ and $1/2 < s < s_1=(\sqrt{5}-1)/2$ are not covered by Varma's results.
 After attempts by several authors (cf.
\cite[pp.~417--419]{BC12}), this gap was filled by Milovanovi\'c \cite{RJ32}, who determined the best constant
$C_n(s)$ for all $s \in (-1,+\infty)$.

\begin{theorem}[\cite{RJ32}]\label{Milov85}
The best constant $C_n(s)$  is 
\[
C_n(s) =\left\{\begin{array}{ll}
\dfrac{1}{(2+s)(1+s)}&(-1<s\le s_n),\\[3mm]
\dfrac{n^2}{(2n+s)(2n+s-1)}&(s_n<s\le+\infty),	
\end{array}\right.
\]
where the sequence $\{s_n\}$ is defined by
\[
s_n=\frac{\sqrt{17 n^2+2 n+1}-3 n+1}{2(n+1)}.	
\]
\end{theorem}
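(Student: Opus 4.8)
The plan is to reduce the extremal problem over the convex cone $W_n$ to an explicit finite‑dimensional problem in the coefficient vector $\mathbf{a}=(a_0,\dots,a_n)$ and then show that the maximum is attained at one of only two candidate points, namely $P(t)=1+s_\star t$‑type low‑degree extremals on one side and $P(t)=t^n$ on the other. First I would write $P(t)=\sum_{\nu=0}^n a_\nu t^\nu$ and compute the two quadratic forms $\|P\|_X^2=\sum_{\mu,\nu} a_\mu a_\nu \mu_{\mu+\nu}$ and $\|P'\|_X^2=\sum_{\mu,\nu}\mu\nu\, a_\mu a_\nu \mu_{\mu+\nu-2}$, where $\mu_k=\int_0^\infty t^{k+s}\E^{-t}\,\D t=\Gamma(k+s+1)$ is the moment of the generalized Laguerre weight. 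Thus one must bound the ratio $R(\mathbf a)=\|P'\|_X^2/\|P\|_X^2$ over the cone $a_\nu\ge 0$.

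The key structural observation is monotonicity in the moments: since all $a_\nu\ge 0$, every cross term $a_\mu a_\nu$ with $\mu+\nu$ fixed contributes the \emph{same} moment $\Gamma(\mu+\nu+s+1)$ in the denominator, while in the numerator the coefficient $\mu\nu$ is maximized (for fixed $\mu+\nu$) by taking the indices as equal as possible and minimized by pushing them apart. The plan is to exploit this to prove a ``two‑term domination'' lemma: for any admissible $P$ one can find real $\lambda,\mu\ge 0$ and indices $0\le i<j\le n$ such that $R(\mathbf a)\le R(\lambda t^i+\mu t^j)$, i.e.\ the supremum over the whole cone is already attained on binomials $P(t)=t^i+ct^j$ with $c\ge 0$. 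For a binomial the ratio becomes an explicit function
\[
R_{i,j}(c)=\frac{i^2\Gamma(2i+s+1)+2ij\,c\,\Gamma(i+j+s+1)+j^2c^2\Gamma(2j+s+1)}{\Gamma(2i+s+1)+2c\,\Gamma(i+j+s+1)+c^2\Gamma(2j+s+1)},
\]
whose supremum over $c\ge 0$ is a generalized eigenvalue problem for a $2\times 2$ pencil and can be written in closed form; one then checks that among all $(i,j)$ the optimum is either $(i,j)=(0,1)$ (giving the constant $1/((2+s)(1+s))$, extremal polynomial $P(t)=1+ct$) or $(i,j)=(n-1,n)$ or the pure monomial $j=n$, $i$ irrelevant (giving $n^2/((2n+s)(2n+s-1))$, extremal polynomial $P(t)=t^n$). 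Comparing these two candidate values as functions of $s$ yields a single crossover: the two expressions are equal precisely when $(1+s)(2+s)n^2=(2n+s)(2n+s-1)$, a quadratic in $s$ whose relevant root is exactly $s_n=\bigl(\sqrt{17n^2+2n+1}-3n+1\bigr)/2(n+1)$, and a sign check shows the first value dominates for $s\le s_n$ and the second for $s\ge s_n$.

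The main obstacle I expect is the two‑term domination lemma: justifying rigorously that the supremum of the Rayleigh quotient over the cone $a_\nu\ge0$ is actually attained on a binomial. The naive argument ``fix the degree sum and collapse'' is not immediate because the denominator couples all pairs. The cleanest route is probably an extreme‑point / reduction argument: show that if the maximizing $\mathbf a$ (which exists by compactness after normalizing $\|P\|_X=1$, the cone intersected with the sphere being compact) has three or more nonzero coefficients $a_i,a_j,a_k$, then one can perturb within the two‑dimensional face spanned by two of them, keeping $a_\nu\ge0$, and strictly increase $R$ unless the gradient condition forces a contradiction; iterating drives the support down to two indices. An alternative, and perhaps technically smoother, approach is to keep the full matrix formulation but pass to the substitution that writes $P$ in the Laguerre‑polynomial basis only when convenient, and otherwise to treat the monomial‑basis Gram pencil $\bigl(\|P'\|^2,\|P\|^2\bigr)$ directly and invoke the Perron–Frobenius structure coming from the total positivity of the Hankel/Gram matrices of the moment sequence $\Gamma(k+s+1)$, which guarantees that the extremal $\mathbf a$ can be taken with a sign pattern compatible with the cone and then reduced. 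Once the lemma is in hand the remaining computation — solving the $2\times2$ pencil and locating the crossover $s_n$ — is elementary algebra, and the boundary/edge cases ($i=0$ versus $i\ge1$ when paired with $j=n$) only need a short monotonicity check in $i$.
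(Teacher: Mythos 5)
Your target values, the crossover equation $(1+s)(2+s)n^2=(2n+s)(2n+s-1)$, and the resulting formula for $s_n$ are all correct, but the proposal has a genuine gap at its crux: the ``two-term domination lemma'' (that the supremum of the Rayleigh quotient over the cone $a_\nu\ge 0$ is attained on binomials $t^i+ct^j$) is exactly the hard step, and you only sketch two speculative strategies for it (a perturbation/extreme-point argument and an appeal to total positivity of the Hankel--Gram pencil) without carrying either out. As it stands the argument is not a proof. Note also that your claimed extremal for the low-$s$ regime, $P(t)=1+ct$, does not attain the value $1/((1+s)(2+s))$ for any finite $c$; the extremal is the monomial $P(t)=t$ (and $P(t)=t^n$ in the other regime), which already signals that the binomial reduction is the wrong organizing principle.

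The paper itself quotes this theorem from \cite{RJ32} without proof, and the argument there is far simpler than what you propose --- and you in fact brush against it in your ``key structural observation'' before veering off. Writing
\[
\|P'\|_X^2=\sum_{\mu,\nu\ge1}\mu\nu\,a_\mu a_\nu\,\Gamma(\mu+\nu+s-1)
=\sum_{\mu,\nu\ge1}\frac{\mu\nu}{(\mu+\nu+s)(\mu+\nu+s-1)}\,a_\mu a_\nu\,\Gamma(\mu+\nu+s+1),
\]
and using that every weight $a_\mu a_\nu\,\Gamma(\mu+\nu+s+1)$ is nonnegative, the ratio $\|P'\|_X^2/\|P\|_X^2$ is a sub-weighted average of the numbers $\phi(\mu,\nu)=\mu\nu/((\mu+\nu+s)(\mu+\nu+s-1))$, hence bounded by $\max_{1\le\mu,\nu\le n}\phi(\mu,\nu)$, with equality attained by the monomial $t^m$ when the maximum sits at a diagonal pair $(m,m)$. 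Since $\phi$ depends on $\mu+\nu$ only through the denominator, AM--GM puts the maximum on the diagonal, and the elementary function $g(m)=m^2/((2m+s)(2m+s-1))$ is monotone or unimodal on $[1,n]$, so its maximum over integers is at $m=1$ or $m=n$; comparing $g(1)$ and $g(n)$ gives precisely your quadratic $(n+1)s^2+(3n-1)s-2n=0$ and the stated $s_n$. This term-by-term domination completely bypasses the reduction lemma on which your whole plan depends.
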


As we can see, the best constant $s\mapsto C_n(s)$ is a continuous non-increasing function in $s$. The sequences $\{s_n\}$ is a decreasing, $s_1>s_2>s_3>\cdots\,$ and converges to
\[
\lim_{n\to+\infty}s_n=s_{\infty}=\frac{1}{2}(\sqrt{17}-3)\approx 0.56155281.	
\]
Otherwise, a few first terms of this  sequences are
\begin{alignat*}{3}
s_1&=\frac{1}{2} \left(\sqrt{5}-1\right)\approx 0.61803399,&\quad &s_2=\frac{1}{6} \left(\sqrt{73}-5\right)\approx0.59066729,	\\
s_3&=\frac{1}{2}\left(\sqrt{10}-2\right)\approx 0.58113883,	&\quad &s_4=\frac{1}{10}
   \left(\sqrt{281}-11\right)\approx0.57630546,\\
s_5&= \frac{1}{6}\left(\sqrt{109}-7\right)\approx0.57338442,
&\quad &s_6=\frac{4}{7}\approx0.57142857,\ \ \mbox{etc.}\end{alignat*}

It is clear that $C_n(s)\le C_{n+1}(s)$, because $W_{n}\subset W_{n+1}$, but for $-1<s<{1}/{2}$ the constants for different $n$ are the same. Notice, that for $n=1$ the same expression holds for each $s>-1$, i.e.,
 $C_1(s)=1/((2+s)(1+s))$. Also, it is interesting to mention that in the limit case when $n\to +\infty$, the best constant becomes the constant $1/4$ for each $s\ge s_{\infty}=(\sqrt{17}-3)/2\approx 0.56155281$.

The corresponding extremal problems for higher derivatives are also investigated, as well as for other weight functions
(for details see \cite{RJ50,RJ51} and  \cite[pp.~644--664] {mil1994}).
\medskip 

{\bf Acknowledgement.}
The work was supported in part by the Serbian Academy of Sciences and Arts ($\Phi$-96).

\end{document}